\theoremstyle{plain}
\newtheorem{theorem}{Theorem}
\newtheorem{corollary}[theorem]{Corollary}
\newtheorem{prop}[theorem]{Proposition}
\newtheorem{definition}[theorem]{Definition}
\newtheorem{example}[theorem]{Example}
\newtheorem{remark}[theorem]{Remark}
\numberwithin{equation}{section}
\title[Banach limits]{Banach limits - Some new thoughts and perspectives}
\author{M.A. Sofi}
\address{Department of Mathematics, University of Kashmir, Srinagar-190006, India}
\email{aminsofi@gmail.com}
\begin{document}
\maketitle
\footnotetext{\textbf{AMS Mathematics Subject Classification(2000)}: 46A22, 46B20}
\footnotetext{\textbf{Keywords}: Banach limit, invariant mean, Lipschitz mapping. }
\begin{abstract}
The existence of a Banach limit as a translation invariant positive continuous linear functional on the space of bounded scalar sequences which is equal to 1 at the constant sequence $(1,1,\ldots,1,\ldots)$ is proved in a first course on functional analysis as a consequence of the Hahn Banach extension theorem. Whereas its use as an important tool in classical summability theory together with its application in the existence of certain invariant measures on compact (metric) spaces is well known, a renewed interest in the theory of Banach limits has led to certain applications which have opened new vistas in the structure of Banach spaces.

The paper is devoted to a discussion of certain developments, both classical and recent, surrounding the theory of Banach limits including the structure of the set of Banach limits with special emphasis on certain aspects of their applications to the existence of certain invariant measures, vector valued analogues of Banach limits, functional equations and in the structure theory of Banach spaces involving the existence of selectors of certain multi-valued mappings into the metric space of non-empty, convex, closed and bounded subsets of a Banach space with respect to the Hausdorff metric. The paper shall conclude with a brief description of some recent results of the author on the study of 'simultaneous continuous linear' operators (linear selections) involving Hahn Banach extensions on spaces of Lipschitz functions on (subspaces of) Banach spaces.  
\end{abstract}
\section{Introduction}
Some open problems that naturally arise in the study shall also be included. 
Some open problems that naturally arise in the study shall also be included. 
\begin{definition}
A Banach limit (also called invariant mean) is a positive shift invariant 
$\big(L(x_1, x_2\ldots, x_n,\ldots ,  ) =  (x_2, x_3, \ldots,x_{n+1},\ldots)\big)$ continuous linear functional $L$ on the space $\ell_\infty$ (of bounded sequences) such that $L(e) = 1$ $ (e = (1,1,1,\ldots,1,\ldots).$  
\end{definition}
\subsection{Existence of Banach limits}
One of the common methods to prove the existence of Banach limits is via the use of the invariant version of the Hahn Banach theorem whereas the first proof by M.M. Day had made use of Markov-Kakutani fixed-point theorem. However, an application of the extension form of Hahn Banach theorem applied to the limit functional on the subspace $c$ (of convergent sequences) of $\ell_\infty$ with respect to a certain sublinear functional $\Lambda$ on $\ell_\infty$ yields the existence of a Banach limit. Indeed, taking $\Lambda$ to be the sublinear functional defined by:
\begin{align*}
\Lambda(x)=\underset{n\to \infty}{\lim}a_n(x), \qquad x\in\ell_\infty
\end{align*}
where $a_n(x)=\underset{i\geq 1}{\sup}\sum_{j=0}^{n-1}x_{i+j},$ we note that if $n=km+r$ with $0\leq r<m,$ then for each $n>m,$ $a_n(x)\leq a_m(x)+\frac{m}{n}.$ It follows that the above limit exists and that 
\begin{align*}
\Lambda(x)=L(x)=\underset{n\to \infty}{\lim} x_n, \qquad\text{for} x~~\in c.
\end{align*}
Applying the Hahn Banach extension theorem to the limit functional $L$ on $c$ with respect to the sublinear functional $\Lambda$ gives a linear functional $F$ on $\ell_\infty$ dominated by $\Lambda$ and agreeing with $L$ on $c.$ Further, we note that $F$ is shift invariant. Indeed, let $x=(x_1, x_2,…,x_n,…)\in\ell_\infty$, and put
$y=(x_1, x_2,\ldots,x_n,\ldots)-(x_2,x_2,\ldots,x_n,\ldots).$ It follows that $\Lambda(y)=\Lambda(-y)=0$ and, therefore, $F(y) = 0.$ In other words, $F(x_1, x_2,\ldots,x_n,\ldots) = F(x_2,x_2,\ldots,x_n,\ldots).$ Finally, the fact that $F$ is dominated by $\Lambda$ gives that $F$ is positive and that $F(e) = 1.$ 

\subsection{Existence of Banach limits with further invariance properties: }
Besides the shift operator, there are classes of bounded linear operators $\Gamma$ on $\ell_\infty$  and Banach limits which are invariant with respect to $\Gamma$ in the sense that $BH=B$ for all $B\in\mathcal{B}$ and $H\in\Gamma.$ A useful choice for Γ is the subset of all operators $T\in L(\ell_\infty  )$ satisfying:
\begin{enumerate}
\item[(i)] $T$ is positive.
\item[(ii)] $T(e)\geq 0.$
\item[(iii)] $\underset{j\to \infty}{\limsup}\big(A(I-T)x\big)_j\geq 0,$ $x\in\ell_\infty,$ $A\in H=\text{conv}\{T^n,n=1,2,\ldots\}$ (See \cite{12}).
\end{enumerate}
An old result of Eberlein (\cite{4}) ensuring the existence of Banach limits which are invariant with respect to the Ces\`{a}ro operator $C$ follows as a special consequence where 
\begin{align*}
\big(C(x)\big)_n=\frac{1}{n}\sum_{i=1}^{n}x_i, \quad x\in\ell_\infty.
\end{align*}
\subsection{Extreme points of the set $\mathcal{B}$}
We shall denote by $\mathcal{B}$ the set of all Banach limits and by $\text{Ext}(\mathcal{B}),$ the set of its extreme points. We have the following theorems of Semenov-Sukochev (\cite{13}).

\begin{theorem}\label{sst1}
Each sequence of distinct points of  $\textnormal{Ext}(\mathcal{B})$ is 1-equivalent to the unit vector basis of $\ell_1.$
\end{theorem}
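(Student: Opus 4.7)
The goal is to show that for every sequence $(B_n)_{n\geq 1}$ of distinct points of $\textnormal{Ext}(\mathcal{B})$ and every finitely supported scalar sequence $(a_n)$,
\[
\Bigl\|\sum_n a_n B_n\Bigr\|_{\ell_\infty^*}=\sum_n|a_n|.
\]
The bound $\leq$ is automatic because each Banach limit is positive with $B(e)=1$, so $\|B\|=1$ and the triangle inequality suffices. All of the work lies in the reverse inequality.

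My plan is to reduce the reverse inequality to a \emph{disjoint support} statement: for any finite set $\{B_1,\ldots,B_k\}$ of distinct elements of $\textnormal{Ext}(\mathcal{B})$ and any $\varepsilon>0$, there exist pairwise disjoint subsets $E_1,\ldots,E_k\subseteq\mathbb{N}$ with $B_i(\chi_{E_i})\geq 1-\varepsilon$ for every $i$; disjointness combined with $B_i\geq 0$ and $B_i(e)=1$ then forces $B_i(\chi_{E_j})\leq \varepsilon$ whenever $j\neq i$. Granted this, for a given $(a_n)$ supported in $\{1,\ldots,k\}$ I would set $\sigma_i=\mathrm{sign}(a_i)$ and $x=\sum_{i=1}^k\sigma_i\chi_{E_i}$. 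Disjointness yields $\|x\|_\infty\leq 1$, while a short computation gives $\sum_i a_iB_i(x)\geq(1-k\varepsilon)\sum_i|a_i|$. Letting $\varepsilon\downarrow 0$ delivers the lower bound.

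The substance of the proof is therefore the disjoint-support property. The mechanism I would pursue is mutual singularity of the associated finitely additive probability measures $\mu_i(E)=B_i(\chi_E)$ on $\mathbb{N}$. For $k=2$, distinctness of extreme Banach limits should give $\|B_1-B_2\|_{\ell_\infty^*}=2$, producing some $x$ with $\|x\|_\infty\leq 1$, $B_1(x)\approx 1$, $B_2(x)\approx -1$; a suitable super-level set of $x$ then yields a separating characteristic function. For general $k$, rather than iterating (which produces awkward error accumulation), I would treat all $k$ Banach limits simultaneously via the linear map $\Phi\colon\ell_\infty\to\mathbb{R}^k$, $x\mapsto(B_1(x),\ldots,B_k(x))$, and use a Hahn-Banach separation in $\mathbb{R}^k$ to show that $\Phi(\{0\leq x\leq e\})$ approaches each standard basis vector; a rounding step based on super-level sets would then convert the approximate witnesses into genuine disjoint characteristic functions.

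The main obstacle will be the first, non-formal step: establishing that distinct extreme Banach limits are mutually singular, or equivalently at norm distance $2$. This is not a consequence of extremality alone---it fails, for example, among pure states of noncommutative C*-algebras---and rests on structural features of $\mathcal{B}$, presumably that $\mathcal{B}$ is a Bauer simplex whose extreme boundary behaves like the space of pure states on a suitable commutative quotient structure built out of $\ell_\infty$ modulo shift-invariant null directions. Once mutual singularity is in hand, the passage through disjoint supports to the $\ell_1$-equivalence is essentially formal.
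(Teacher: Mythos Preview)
The paper does not contain a proof of this theorem: it is merely \emph{stated} in Section~1.3 as a result of Semenov and Sukochev, with a citation to \cite{13}, and the paper moves on immediately to the next cited theorem. There is therefore no ``paper's own proof'' against which to compare your proposal.

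As for the proposal itself, the reduction you outline---from the $\ell_1$ lower bound to the existence of near-disjoint supporting sets, and from there to the claim that distinct extreme Banach limits lie at norm distance~$2$---is the standard and correct architecture. You are also right that the entire weight of the argument sits on the last claim, and you are candid that you have not established it. That honesty is appropriate, but it does leave the proposal as a plan rather than a proof: the assertion that $\mathcal{B}$ is a simplex (so that distinct extreme points are mutually singular as finitely additive measures on~$\mathbb{N}$, hence at distance~$2$) is exactly the substantive content of the Semenov--Sukochev result, and your sketch does not supply it. The Hahn--Banach/rounding machinery you describe for passing from mutual singularity to disjoint characteristic witnesses is routine once that is in hand; without it, the argument has a genuine gap at precisely the point you identify.
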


\begin{theorem}\label{sst2}
The closed convex hull of  $\textnormal{Ext}(\mathcal{B})$ does not contain a Ces\`{a}ro invariant Banach limit.
\end{theorem}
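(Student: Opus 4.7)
The plan is to argue by contradiction: assume some Cesàro invariant Banach limit $B_C$ belongs to $K := \overline{\mathrm{conv}}^{\|\cdot\|}\bigl(\mathrm{Ext}(\mathcal{B})\bigr)$, and derive a contradiction. The first move is to exploit Theorem \ref{sst1} to pin down $K$ very concretely. The $1$-equivalence with the unit vector basis of $\ell_1$ tells us that any series $\sum c_i B_i$ with $(c_i) \in \ell_1$ and distinct $B_i \in \mathrm{Ext}(\mathcal{B})$ converges in norm and satisfies $\|\sum c_i B_i\| = \sum |c_i|$. Hence $K$ is exactly the set of $\ell_1$-style convex series $\sum_i \lambda_i B_i$ with distinct extreme $B_i$'s and a probability weight $(\lambda_i)$, and such a representation is unique. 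Fix $B_C = \sum_i \lambda_i B_i$.

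The second move is to bring Cesàro invariance to bear. One first checks that the pullback $\tilde{C}: B \mapsto B \circ C$ maps $\mathcal{B}$ into itself; the only nontrivial point, shift invariance, follows from $(C(Sx))_n - (Cx)_n = (x_{n+1}-x_1)/n \in c_0$, which every Banach limit annihilates. Cesàro invariance of $B_C$ then reads $B_C = \tilde{C}(B_C)$, and together with the representation above yields
\[ \sum_i \lambda_i\bigl(B_i - B_i \circ C\bigr) = 0 \qquad \text{in } \ell_\infty^{*}. \]

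The crux, and the step I expect to be the main obstacle, is turning this identity into a contradiction. The guiding heuristic is that Cesàro averaging is a genuine smoothing, while extreme Banach limits are ``atomic'', so $B_i \circ C$ must differ from $B_i$ in a quantitatively uniform fashion. To make this usable, I would try to produce a bounded family $(y^{(k)}) \subset \ell_\infty$ such that $B_i(y^{(k)} - Cy^{(k)}) \geq \delta > 0$ uniformly in $i$ and $k$, whereas $B_C(y^{(k)} - Cy^{(k)}) = 0$ by Cesàro invariance. Pairing the displayed identity against such $y^{(k)}$ and invoking the $\ell_1$-isometry from Theorem \ref{sst1} then forces $\delta \sum_i \lambda_i \leq 0$, contradicting $\sum_i \lambda_i = 1$. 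Producing the uniform separator $(y^{(k)})$ is the genuinely nontrivial content; it requires the finer Semenov--Sukochev description of $\mathrm{Ext}(\mathcal{B})$, the same analysis that underpins Theorem \ref{sst1}.
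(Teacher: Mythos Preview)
The paper does not prove this theorem; it merely quotes it from Semenov--Sukochev \cite{13}, so there is no in-paper argument to compare against. Judged on its own terms, your proposal is an outline whose decisive step is explicitly deferred, and as formulated that step is problematic.

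The gap is the ``uniform separator'' $y^{(k)}$. You need some bounded $y$ with $B_i(y)-B_i(Cy)\geq\delta>0$ simultaneously for all the extreme $B_i$ occurring in the representation of $B_C$, and you concede this ``requires the finer Semenov--Sukochev description of $\mathrm{Ext}(\mathcal{B})$''. But that description is exactly the substance of the result you are trying to establish, so at this point you have a strategy, not a proof. There is also a warning sign about the strategy itself: if such a $y$ worked uniformly over \emph{all} of $\mathrm{Ext}(\mathcal{B})$, then by Krein--Milman the inequality $B(y-Cy)\geq\delta$ would persist for every $B\in\mathcal{B}$, contradicting Eberlein's theorem that Ces\`aro-invariant Banach limits exist. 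Hence any separator must be tailored to the particular countable family $\{B_i\}$ coming from $B_C$, and you offer no mechanism for constructing one; nothing in Theorem~\ref{sst1} alone supplies it.

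Two smaller remarks. First, once you have $\sum_i\lambda_i\bigl(B_i(y)-B_i(Cy)\bigr)=0$ with each summand $\geq\delta$, the contradiction follows directly from $\sum_i\lambda_i=1$; the $\ell_1$-isometry of Theorem~\ref{sst1} plays no role at that final step, only in securing (and making unique) the series representation $B_C=\sum_i\lambda_i B_i$. Second, you do not use the uniqueness of that representation anywhere after asserting it, so the route through ``$\sum_i\lambda_i(B_i-B_i\circ C)=0$ plus uniqueness'' is not actually pursued; if you intended to argue that way, you would additionally need to know whether $B_i\circ C$ is again extreme, which you have not addressed.
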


\subsection{Applications}
\noindent \textit{(a). Existence of certain invariant measures:}

\noindent (i) \textbf{Theorem:} 
Each compact metric space $X$ admits a regular Borel measure which is invariant under a given continuous mapping on $X.$ More generally, every compact metric space admits an isometry-invariant Borel probability measure. 

To see how the proof works for the first part of the statement, let $T:X\to X$ be a given continuous mapping and let $T^n$ denote the $n$th iterate of $T.$ Choose a Banach limit $\Lambda$ on $\ell_\infty$ and fix some $x\in X.$ Further, consider the linear functional $F$ on $C(X)$ given by
\begin{align*}
F(f)=\Lambda\left(f(x),f(T(x)),f(T^n(x)),\ldots\right).
\end{align*}
An application of the Reisz Representation Theorem yields a unique regular Borel measure $\mu$ on $X$ such that $F(f)=\int_{X} fd\mu$ for each $f\in C(X).$ Using the (shift) invariance of $\Lambda$ gives 
\begin{align*}
\Lambda\left(f(x),f(T(x)),f(T^3(x)),\ldots\right)=\Lambda\left(f(T(x)),f(T^2(x)),f(T^3(x)),\ldots\right),
\end{align*}
or equivalently,$\int_{X} fd\mu=\int_{X} f\circ Td\mu$ for each$f \in C(X).$ Finally, invoking the change of formula from calculus, we see that $\int_{X} f d\mu=\int_{X} f d\mu T^{-1}$  holds for each $f\in C(X).$ Finally, the uniqueness of the measure $\mu$ gives that $\mu=\mu T^{-1},$ yielding thereby that $\mu$ is T-invariant.

The latter part of the theorem follows from the existence of the Haar measure on a compact topological group. Indeed, noting that the group of isometries $\text{Iso}(X)$ on $X$ is compact under the sup-norm, there exists a Haar measure, say $\nu$ on $\text{Iso}(X).$ Fix $x\in X.$ Then the function $\mu$ on $P(X)$ defined by: $\mu(A)=\nu (\{g\in \text{Iso}(X):g(x)\in A\})$ is the desired measure.

\noindent (ii) There exists a finitely additive non-negative probability isometry-invariant measure on the $\sigma$- algebra of all subsets of $\mathbb{R}$ (resp.$\mathbb{R}^2$). (Such a measure is called a \textit{Banach measure}).

The above important result is proved by refining and extending the idea of a Banach limit where now a (semi)group $S$ is used in place of $\mathbb{R}$ thus necessitating using the Banach space $\ell_\infty (S)$ in place of $\ell_\infty$ and equipped with the supremum norm. More precisely, we have the following:
\begin{definition}
A (semi)group $S$ is said to be \textbf{\textit{(right) amenable}} if there exists a (right) \textbf{\textit{invariant mean}} on $\ell_\infty (S),$ i.e., a (norm-one) linear functional $F$ on $\ell_\infty (S)$ such that $F(e) =1$ and $F(f) = F(f^s)$ where $f^s (t)=f(ts),~t,s\in S.$ Similarly for the left invariant mean. Further, S is said to be \textbf{\textit{amenable}} if it is both right and left amenable.
\end{definition}
Thus the existence of a Banach limit on $\ell_\infty$ as shown in (1.1) above amounts to saying that $\mathbb{R}$ as a group is amenable. In fact, each abelian group, and more generally each solvable group is amenable. Also there are non-abelian amenable groups which include the class of solvable groups. A concrete example of such a group is provided by the rotation group $G =\text{ Iso}(\mathbb{R})$ of $\mathbb{R}$ (or $\text{Iso}(\mathbb{R}^2).$ Thus, there exists an invariant mean, say $F$ on $G$ and it is easy to check that the formula $\nu (A)=F(\chi_A)$ defines a Banach measure on $G.$  Finally, under the action of $G$ on $X = \mathbb{R}$ (resp. on $\mathbb{R}^2$), the desired $G$-invariant measure $\mu$ on $X$ is defined, as usual, by fixing $x\in X$ and taking $\mu (A)=\nu({g\in\text{Iso}(X):g(x)\in A}).$ This completes the argument involving the proof of (1.4(ii)) above. However, the result fails for higher dimensions as the isometry group $\text{Iso}(\mathbb{R}^n )$ is not amenable for $n\geq 3$ (see \cite[Chapter 18]{3}).  

\noindent \textit{(b). Classical Summability Theory}

Given a Banach limit $L,$ it follows that $L(x) = \underset{n\to \infty}{\lim}x_n,$ for each $ x\in c.$ 
There are (bounded) non-convergent sequences having this property, e.g. it can be checked that for
$x = (1, 0, 1, 0, \ldots),$ $L(x) = \frac{1}{2}$ for all Banach limits $L.$ A sequence with this property shall be designated as being almost convergent. This definition was proposed by G. Lorentz in 1948: 
\begin{definition}[\cite{10A}]
A sequence $x=(x_n)\in\ell_\infty$ is said to be almost convergent to $s\in\mathbb{R}$  (called the $F$-limit) if $L(x) = s$ for all Banach limits $L.$
\end{definition}
\begin{theorem}[\cite{10A}]
Given $x=(x_n)\in\ell_\infty$, then $F$-limit $x_n=s$ if and only if
\begin{align*}
\underset{m\to \infty}{\lim}\frac{1}{m}\left(\sum_{j=0}^{m-1}x_{n+j}\right)=s,
\end{align*}
uniformly in $n.$
 \end{theorem}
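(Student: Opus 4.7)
The strategy is to tie the $F$-limit concept to the sublinear functional $\Lambda$ from Section~1.1 and then to invoke Hahn--Banach in both directions. Introduce its lower companion $\underline{\Lambda}(x) := -\Lambda(-x)$; after the natural $\tfrac{1}{m}$ normalization of the $a_n$'s from Section~1.1, one has
\[
\Lambda(x) = \lim_{m\to\infty}\sup_{n\geq 1}\frac{1}{m}\sum_{j=0}^{m-1}x_{n+j}, \qquad \underline{\Lambda}(x) = \lim_{m\to\infty}\inf_{n\geq 1}\frac{1}{m}\sum_{j=0}^{m-1}x_{n+j},
\]
so that the right-hand condition of the theorem (uniform convergence of the block averages to $s$) is \emph{literally} the equality $\underline{\Lambda}(x) = \Lambda(x) = s$. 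The theorem therefore reduces to the representation identity $\{L(x) : L \in \mathcal{B}\} = [\,\underline{\Lambda}(x),\, \Lambda(x)\,]$.

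For the ($\Leftarrow$) direction I would introduce the averaged sequence $y^{(m)}$ with $y^{(m)}_n = \frac{1}{m}\sum_{j=0}^{m-1} x_{n+j}$. Repeated shift invariance and linearity of any Banach limit $L$ give $L(y^{(m)}) = L(x)$ for every $m$, while the hypothesis says $\|y^{(m)} - s\,e\|_\infty \to 0$. Continuity of $L$ then forces $L(x) = s$ for every $L \in \mathcal{B}$.

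The ($\Rightarrow$) direction rests on the Hahn--Banach representation $\Lambda(x_0) = \max\{L(x_0) : L \in \mathcal{B}\}$, together with the analogous formula for $\underline{\Lambda}$. To produce a Banach limit attaining $\Lambda(x_0)$, I would work on the subspace $M = c \oplus \mathbb{R}\,x_0$ and define $F_0(y + \alpha x_0) := \lim_n y_n + \alpha \Lambda(x_0)$. A short case analysis based on the sublinearity consequence $\Lambda(u+v) \geq \Lambda(u) - \Lambda(-v)$ (applied with $u = \alpha x_0$, $v = y$ when $\alpha\geq 0$, and with $u = y$, $v = \alpha x_0$ when $\alpha < 0$) shows $F_0 \leq \Lambda|_M$; Hahn--Banach then extends $F_0$ to $F \leq \Lambda$ on $\ell_\infty$. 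Exactly as in Section~1.1, any linear functional dominated by $\Lambda$ is automatically shift invariant, positive, and satisfies $F(e) = 1$, hence is a Banach limit, and by construction it realizes $F(x_0) = \Lambda(x_0)$. Running the same recipe on $-x_0$ yields a Banach limit attaining $\underline{\Lambda}(x_0)$. Under the hypothesis $L(x) = s$ for every $L \in \mathcal{B}$, both extremes collapse to $s$, giving $\Lambda(x) = \underline{\Lambda}(x) = s$ and hence the uniform Cesàro condition.

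The main obstacle is this attainment step: the ($\Leftarrow$) direction and the reduction to the representation identity are essentially bookkeeping, but one has to carefully verify the one-dimensional inequality $F_0 \leq \Lambda|_M$ and confirm that a linear extension dominated by $\Lambda$ really is shift invariant (via $\Lambda(\pm(x-Tx)) = 0$, exactly the argument used in Section~1.1). Once the two attainment claims are secured, the equivalence $\{L(x) : L \in \mathcal{B}\} = [\,\underline{\Lambda}(x),\, \Lambda(x)\,]$ folds both implications into a single line and the theorem follows.
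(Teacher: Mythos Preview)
The paper does not supply its own proof of this theorem; it merely states the result with a citation to Lorentz~\cite{10A} and then passes to the Remark recording Sucheston's interval formula $\{Lx:L\in\mathcal{B}\}=[p,q]$. So there is no in-paper argument to compare against.

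Your proposal is correct, and in fact it proves a little more than is needed: the attainment step (producing Banach limits realising $\Lambda(x_0)$ and $\underline{\Lambda}(x_0)$) together with the easy containment $\{L(x):L\in\mathcal{B}\}\subset[\underline{\Lambda}(x),\Lambda(x)]$ and the convexity of $\mathcal{B}$ already yields Sucheston's formula quoted in the Remark immediately following the theorem. The $(\Leftarrow)$ direction via $y^{(m)}=\tfrac1m\sum_{j=0}^{m-1}T^jx$ and $\|y^{(m)}-se\|_\infty\to0$ is the standard clean argument. For the $(\Rightarrow)$ direction your case analysis checking $F_0\le\Lambda|_M$ is fine; just note explicitly that you may assume $x_0\notin c$ (otherwise there is nothing to do), so that $M=c\oplus\mathbb{R}x_0$ is a genuine direct sum and $F_0$ is well defined. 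The appeal to the Section~1.1 computation $\Lambda(\pm(x-Tx))=0$ to deduce shift invariance of the Hahn--Banach extension is exactly right.
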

\begin{remark}
In the general case, Sucheston \cite{18} shows that for each $x=(x_n)\in\ell_\infty,$ we have:
$\{Lx;L\in\mathcal{B}\}=[p,q],$ where 
\begin{align*}
p=\underset{m\to \infty}{\lim}\underset{n\geq 1}{\inf}\dfrac{1}{m}\left(\sum_{j=0}^{m-1}x_{n+j}\right),~~q=\underset{m\to \infty}{\lim}\underset{n\geq 1}{\sup}\dfrac{1}{m}\left(\sum_{j=0}^{m-1}x_{n+j}\right)
\end{align*}
\end{remark}
\noindent \textit{(c). Stability of Cauchy functional equations}
\begin{definition}
Given a (semi)group $G,$ a mapping  $f:G\to G$ is said to be \textit{additive} if it satisfies the Cauchy functional equation:
\begin{align*}
f(x+y)=f(x)+f(y), \quad\text{for all}~~x,y\in G.
\end{align*}
\end{definition}
Hyers-Ulam stability theorem \cite{7} (see\cite[Chapter 15]{3}) states that for Banach spaces $X$ and $Y$ and a mapping $f:X\to Y$  such that
\begin{align}\label{star}
\|f(x+y)-f(x)-f(y)\|<\epsilon, \quad\text{for all}~~ x,y\in X, 
\end{align}
there exists a unique additive mapping $g:X\to  Y$ satisfying
\begin{align}\label{star2}
\|f(x)-g(x)\|<\epsilon, \forall x\in X.
\end{align}
\textbf{Question 1.} Given a semigroup $S$ and a Banach space $X,$ does it follow that for each $f:S\to X$ and $\epsilon>0$ satisfying \eqref{star}, there exists $g:S\to X$ for which \eqref{star2} holds for all $x,y\in S.$

When such is the case, we say that the Cauchy functional equation is stable. 

Question 1 has an affirmative answer for each (left) amenable semigroup $S$ and Banach spaces $X$ in each of the following cases where $X$ is:
\begin{enumerate}
\item[(i)] Reflexive (\cite{1}).
\item[(ii)] Injective (\cite{2}).
\end{enumerate}
The proof in each of the above cases depends upon the notion and the existence of vector-valued Banach limits on certain special spaces of $X$-valued bounded functions on a semigroup $S$ and a certain 'selection procedure'. (See Section (d) below).

\noindent \textit{(d) Vector valued Banach limits}

\noindent \textbf{Question 2.} Given a semigroup $S$ and a Banach space $X,$ is it true that there exists an $X$-\textit{valued (left)-(resp. right) invariant mean} on $S,$ i.e., a bounded linear map $M:B(S,X)\to X$ such that (i) $M(x\chi_S)=x$ for all $x\in X$ and (ii) $M(f) = M(f_S)$ (resp. $M(f)=M(f^S))$ where $B(S,X)$  denotes the Banach space of all $X$-valued bounded functions on $S$ and $x\chi_S$ stands for the function on $S$ which is identically equal to $x.$ By an invariant mean we shall mean a left-invariant mean which is also right invariant. 

The above definition of vector-valued invariant mean has been defined and investigated in the literature by several authors. Some recent developments along these lines and appropriate to the theme of the present work appear in \cite{9}. The case $S = \mathbb{N}$ and $ X=\mathbb{R}$ leads to the classical notion of the invariant mean. As we shall see below, vector-valued invariant mean always exists for a large class of Banach spaces. We will see that for the purpose of proving stability results involving Q1, it suffices to show the existence of an $X$-valued left (resp. right) invariant mean when we restrict ourselves to certain special subspaces of $B(S,X).$ The next result provides a sample of such results. Here $\overline{\text{co}}(A)$ denotes the weakly closed convex hull of $A.$

\begin{theorem}\textnormal{\cite{1}}\label{thm9}
(a) Let $X$ be a Banach space and $S$ a (left) amenable semigroup. Then the space $B_{wc} (S,X)$ of all $X$-valued functions on $S$ whose range has a weakly compact convex hull admits a (left) invariant mean $M:B_{wc} (S,X)\to X$ such that $M(f)\in\overline{\textnormal{co}}(f(S))$ for each $f\in B_{wc} (S,X).$  In particular, for reflexive Banach space $X$ and (left) amenable semigroup $S,$ there exists a (left) invariant mean $M:B(S,X)\to X$ with $M(f)\in\overline{\textnormal{co}}(f(S))$ for each $f\in B_{wc}(S,X).$
\end{theorem}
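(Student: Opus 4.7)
The plan is to lift a scalar left-invariant mean $m$ on $\ell_\infty(S)$ (whose existence is guaranteed by the left amenability of $S$) to a vector-valued mean by first defining it as an element of $X^{**}$ and then showing that it must actually land inside the weakly compact convex set $\overline{\text{co}}(f(S))$.

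Given $f \in B_{wc}(S,X)$, for each $x^* \in X^*$ the composition $x^*\circ f$ is a bounded real-valued function on $S$, so I define $\Phi_f \colon X^* \to \mathbb{R}$ by $\Phi_f(x^*) := m(x^*\circ f)$. Linearity in $x^*$ is immediate, and the estimate $|\Phi_f(x^*)| \leq \|x^*\|\,\sup_{s\in S}\|f(s)\|$ shows $\Phi_f \in X^{**}$.

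The crux of the argument, and what I expect to be the main obstacle, is to show that $\Phi_f$ lies in $K := \overline{\text{co}}(f(S)) \subseteq X$, so that $M(f) := \Phi_f$ genuinely defines an $X$-valued map. I would argue by contradiction via Hahn-Banach separation: the weak compactness of $K$ in $X$ forces $J(K)$ to be $w^*$-compact (hence $w^*$-closed) in $X^{**}$ under the canonical embedding $J$, so if $\Phi_f \notin J(K)$ there would exist $x^* \in X^*$ and $\alpha \in \mathbb{R}$ with $\Phi_f(x^*) > \alpha \geq \sup_{k \in K} x^*(k)$. But the mean property of $m$ (positivity together with $m(\chi_S)=1$) yields $m(g) \leq \sup_{s\in S} g(s)$ for every real $g \in \ell_\infty(S)$; applying this to $g = x^*\circ f$ gives $\Phi_f(x^*) \leq \sup_{s\in S} x^*(f(s)) \leq \sup_{k\in K} x^*(k)$, contradicting the separation. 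Hence $M(f)\in K$, which simultaneously establishes the range condition and the fact that $M$ takes values in $X$.

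Once the range is pinned down, the remaining properties transfer routinely from $m$ to $M$. Linearity and boundedness of $M \colon B_{wc}(S,X) \to X$ come from applying the corresponding properties of $m$ pointwise in $x^*$; in particular, $\|M(f)\|\leq \|f\|_\infty$. The normalisation $M(x\chi_S) = x$ falls out of $m(x^*\circ x\chi_S) = x^*(x)\,m(\chi_S) = x^*(x)$ for every $x^*\in X^*$. For left-invariance one checks the pointwise identity $x^*\circ f_s = (x^*\circ f)_s$, after which the left-invariance of $m$ gives $M(f_s)=M(f)$. The reflexive case is then an immediate corollary, since in a reflexive space every bounded set has weakly compact closed convex hull, forcing $B_{wc}(S,X) = B(S,X)$ and reducing the second assertion to the first.
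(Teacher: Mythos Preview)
Your proof is correct. The paper does not actually supply its own proof of this theorem---it is stated with a citation to \cite{1}---but your argument is precisely the technique the paper deploys in the surrounding material: the bidual lifting $\Phi_f(x^*)=m(x^*\circ f)$ is exactly formula~\eqref{star3} of Proposition~\ref{prop1}, and the Hahn--Banach separation step you use to force $\Phi_f$ into $J(K)$ is the same contradiction argument carried out in Remark~\ref{r7} for the dual-space case (there with $w^*$-compactness of balls, here with $w^*$-compactness of $J(K)$ coming from the weak compactness of $K$). So your approach is fully aligned with the paper's methodology, and the weak-compactness hypothesis is used in exactly the right place---to guarantee that $J(K)$ is $w^*$-closed so that the separating functional can be taken in $X^*$.
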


In an effort to extend the previous results to a more general class of Banach spaces, it becomes necessary to prove, at least under certain additional conditions, the existence of an invariant mean on the entire space $B(S,X)$ of bounded functions. That this can always be achieved for the class of Banach spaces which are complemented in their bidual is the following result which is not explicitly stated but can be gleaned from the literature. A Banach space which is complemented in its bidual shall be called an ultrasummand. 
\begin{prop}\label{prop1}
Given a (left) amenable semigroup $S$ and an ultrasummand $X,$ there always exists a (left)-invariant mean $M:B(S,X)\to X.$
\end{prop}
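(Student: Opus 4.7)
The plan is to build $M$ in two stages: first produce an $X^{**}$-valued invariant mean on $B(S,X)$ by scalarizing against $X^*$ and applying the (scalar) left-invariant mean guaranteed by amenability, and then push this mean back into $X$ by composing with a bounded projection $P\colon X^{**}\to X$ supplied by the ultrasummand hypothesis. The only genuine input beyond definitions is the scalar amenability of $S$; everything else is functorial bookkeeping.

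More precisely, I would first fix a (left)-invariant mean $\phi\colon\ell_\infty(S)\to\mathbb{R}$ with $\phi(\chi_S)=1$. For each $f\in B(S,X)$ and $x^*\in X^*$, the function $s\mapsto\langle x^*,f(s)\rangle$ lies in $\ell_\infty(S)$ with sup-norm at most $\|x^*\|\,\|f\|_\infty$, so I can define
\begin{align*}
\Phi(f)(x^*)\;=\;\phi\bigl(\langle x^*,f(\cdot)\rangle\bigr),\qquad x^*\in X^*.
\end{align*}
Linearity in $x^*$ is immediate and the estimate $|\Phi(f)(x^*)|\le \|\phi\|\,\|x^*\|\,\|f\|_\infty$ shows $\Phi(f)\in X^{**}$. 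Thus $\Phi\colon B(S,X)\to X^{**}$ is a bounded linear operator.

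Next I would verify the two defining properties of an invariant mean at the $X^{**}$-level. For the constant function $x\chi_S$, $\Phi(x\chi_S)(x^*)=\phi(x^*(x)\chi_S)=x^*(x)$, so $\Phi(x\chi_S)$ coincides with the canonical image $\widehat{x}\in X^{**}$. For left-invariance, since $\langle x^*,f_s(t)\rangle=\langle x^*,f(st)\rangle$ is precisely the left-translate of $\langle x^*,f\rangle$ by $s$, the left-invariance of $\phi$ gives $\Phi(f_s)(x^*)=\Phi(f)(x^*)$ for every $x^*$, hence $\Phi(f_s)=\Phi(f)$.

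Finally I would invoke the ultrasummand hypothesis: let $P\colon X^{**}\to X$ be a bounded projection whose restriction to the canonical copy of $X$ in $X^{**}$ is the identity. Setting $M:=P\circ\Phi\colon B(S,X)\to X$, the previous step gives $M(x\chi_S)=P(\widehat{x})=x$ and $M(f_s)=P(\Phi(f_s))=P(\Phi(f))=M(f)$, as required. The main (and only) subtlety is that the "projection" appearing in the definition of an ultrasummand must be taken to be a projection onto the canonical image of $X$ in $X^{**}$, normalised so that $P(\widehat{x})=x$; this is automatic since any complement projection onto $\widehat{X}\cong X$ acts as the identity on $\widehat{X}$. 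No further obstacles arise.
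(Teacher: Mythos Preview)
Your proof is correct and follows essentially the same approach as the paper: define an $X^{**}$-valued mean by $\hat{M}(f)(x^*)=m(x^*\circ f)$ using a scalar left-invariant mean $m$ on $S$, then compose with $J_X^{-1}\circ P$ to land back in $X$. The paper's version is terser, leaving the verifications you spell out (that $\hat{M}(x\chi_S)=\widehat{x}$ and that left-invariance passes through) as an exercise.
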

\begin{proof}
Let $J_X:X\to X^{**}$ be the canonical embedding and denote by $\hat{X} =J_X (X)$ the canonical image of $X$ inside $X^{**}.$The given conditions on $S$ and $X$ yield the existence of a continuous linear projection $P:X^{**}\to \hat{X}$  and a left invariant mean $m$ on $S$. Define $\hat{M}:B(S,X)\to X^{**}$ by
\begin{align}\label{star3}
\hat{M}(f)(x^{*})=m(x^{*}\circ f), \quad x^{*}\in X^{*}
\end{align}
Then the map $M:B(S,X)\to X$ given by $M=J_{X}^{-1}\circ P\circ\hat{M}$ is easily shown to be linear and left-invariant.
\end{proof}
\begin{remark}\label{r7}
Unlike in the case of reflexive Banach spaces, it does not necessarily follow that the condition: $M(f)\in\overline{\textnormal{co}}(f(S))$ also holds for invariant means $M$ on ultrasummands. However, for bounded functions taking values in a dual space $X=Z^{*},$ it can still be arranged that $M(f)\in\overline{\textnormal{wco}}(f(S)),$ the closed convex hull in the weak$^{*}$-topology. Indeed, given an invariant mean $m$ on $S,$ we define the invariant mean $M:B(S,X)\to X$ by: $M(f)x=m(\hat{x}\circ f),$ $x\in Z.$ Now assuming to the contrary that $M(f)$ lies outside of $\overline{\textnormal{wco}}(f(S)),$ an application of the Hahn Banach separation theorem applied to $M(f)$ yields that there exists $z\in X$ such that $\hat{z} (x)<\hat{z}(M(f))=M(f)(z)$ for all $x\in  \overline{\text{wco}} (f(S)).$ We can choose $\epsilon>0$ such that 
\begin{align*}
\hat{z}(x)+\epsilon\leq M(f)(z),\quad x\in\overline{\textnormal{wco}}(f(S)).
\end{align*}
In particular,     
\begin{align*}
\hat{z}(f(s))+\epsilon\leq M(f)(z),\quad s\in S.
\end{align*}
Applying $m$ on sides gives
\begin{align*}
M(f)z+\epsilon=m(\hat{z}\circ f)+\epsilon\leq M(f)(z),
\end{align*}
an obvious contradiction.  
\end{remark}
The following theorem due to T. Kania which shows that the converse of Proposition \ref{prop1} also holds provides a complete answer to Question 2.
\begin{theorem}\textnormal{\cite{9}} \label{th10}
For a Banach space $X,$ the following statements are equivalent.
\begin{enumerate}
\item[(i)] For each amenable semigroup $S,$ there exists an $X$-valued invariant mean on $S.$
\item[(ii)] $X$ is an ultrasummand.
\end{enumerate}
\end{theorem}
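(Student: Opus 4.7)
The implication (ii) $\Rightarrow$ (i) is exactly Proposition~\ref{prop1}, so the content lies in (i) $\Rightarrow$ (ii). My plan is to turn the hypothesis into a bounded linear projection $P \colon X^{**} \to \hat{X}$; identifying $X$ with $\hat{X}$ then exhibits $X$ as an ultrasummand. The construction uses a specific commutative semigroup $S$ together with Goldstine's theorem to produce, for each $x^{**} \in X^{**}$, a bounded function $f_{x^{**}} \in B(S,X)$ whose invariant mean provides a witness in $X$ for $x^{**}$.

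Concretely, I would take $S$ to be a rich commutative, and hence (by Day's theorem) amenable, semigroup such as $S = \mathcal{P}_{f}(X^{*})$ (the finite subsets of $X^{*}$ under union) or the abelian group $(X^{*},+)$. For each $x^{**} \in X^{**}$ and each $F \in \mathcal{P}_{f}(X^{*})$, Helly's theorem (equivalently Goldstine's) furnishes some $y_{F} \in X$ with $\|y_{F}\| \leq \|x^{**}\|$ and $\langle x^{*}, y_{F}\rangle = x^{**}(x^{*})$ for every $x^{*} \in F$. Setting $f_{x^{**}}(F) := y_{F}$ gives $f_{x^{**}} \in B(S,X)$ with $\|f_{x^{**}}\|_{\infty} \leq \|x^{**}\|$. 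Invoking hypothesis (i), let $M \colon B(S,X) \to X$ be the corresponding invariant mean and define $P(x^{**}) := M(f_{x^{**}})$. Boundedness of $P$ is then immediate from $\|P(x^{**})\| \leq \|M\| \cdot \|x^{**}\|$, and the projection identity $P \circ J_{X} = \operatorname{id}_{X}$ falls out at once: for $x^{**} = J_{X}(x)$ the constant choice $y_{F} \equiv x$ is admissible, so $f_{J_{X}(x)} = x\chi_{S}$ and $P(J_{X}(x)) = M(x\chi_{S}) = x$.

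The main obstacle is the non-canonical nature of the assignment $F \mapsto y_{F}$: both well-definedness of $P(x^{**})$ and linearity of $x^{**} \mapsto P(x^{**})$ require work. The preferred route is to show that for any two admissible selections $(y_{F})$ and $(y'_{F})$ corresponding to the same $x^{**}$, one has $M(f_{x^{**}}) = M(f'_{x^{**}})$, by leveraging the constraint $\langle x^{*}, y_{F} - y'_{F}\rangle = 0$ for $x^{*} \in F$ against the translation invariance of $M$ under the semigroup action. If this resists a direct attack, a fallback is to perform the selection $F \mapsto y_{F}$ linearly in $x^{**}$ via a Hamel-basis decomposition of $X^{**}$, restoring linearity at the cost of elegance. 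Either way, the technical heart of the proof is the interplay between the invariance of $M$ and the Goldstine-type approximation scheme---the subsequent bookkeeping is routine.
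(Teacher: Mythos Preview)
The paper does not actually prove the implication (i)~$\Rightarrow$~(ii); it merely cites Kania~\cite{9} and records (ii)~$\Rightarrow$~(i) separately as Proposition~\ref{prop1}. So there is no proof in the paper to compare your hard direction against, and your proposal must stand on its own.

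It does not. The gap you yourself flag---well-definedness and linearity of $P(x^{**})=M(f_{x^{**}})$---is real, and neither of your two suggested repairs closes it. For route~1, translation by $G=\{x^{*}\}$ indeed forces $g^{\{x^{*}\}}:=f_{x^{**}}^{\{x^{*}\}}-f_{x^{**}}'^{\{x^{*}\}}$ to take values in $\ker x^{*}$, but an abstract invariant mean need \emph{not} send $B(S,\ker x^{*})$ into $\ker x^{*}$. Concretely, take $X=\mathbb{R}^{2}$, $S=\mathbb{N}$, two distinct scalar Banach limits $L,L'$, and set $M(a,b)=\bigl(L(a)+L'(b)-L(b),\,L(b)\bigr)$ for $(a,b)\in\ell_{\infty}\oplus\ell_{\infty}\cong B(\mathbb{N},\mathbb{R}^{2})$. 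This $M$ is linear, shift-invariant, and satisfies $M(x\chi_{\mathbb{N}})=x$, yet for $h(n)=(0,b_{n})$ with $b$ not almost convergent one gets $e_{1}^{*}(M(h))=L'(b)-L(b)\neq 0$ even though $h$ ranges in $\ker e_{1}^{*}$. The same obstruction persists for $S=\mathcal{P}_{f}(X^{*})$. For route~2, a Hamel-basis choice $f_{x^{**}}(F)=\sum c_{\alpha}y_{F,\alpha}$ restores linearity of $x^{**}\mapsto f_{x^{**}}$ but destroys the bound $\|f_{x^{**}}\|_{\infty}\le C\|x^{**}\|$ (so $P$ is unbounded), and moreover $f_{J_{X}(x)}$ is no longer the constant function $x\chi_{S}$, so even the identity $P\circ J_{X}=\mathrm{id}_{X}$ reverts to the unresolved route-1 problem. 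What is missing is a \emph{linear and bounded} assignment $x^{**}\mapsto f_{x^{**}}$; the natural source for this is the principle of local reflexivity (which furnishes, for each pair $(E,F)$ with $E\subset X^{**}$, $F\subset X^{*}$ finite-dimensional, a near-isometric \emph{linear} operator $T_{E,F}\colon E\to X$ agreeing with the identity on $E\cap X$), and the semigroup must be chosen to index these operators rather than individual Goldstine witnesses.
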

In particular, the above theorem applies to the classes of Hilbert spaces on the one hand and injective Banach spaces, on the other. Both these classes of Banach spaces are known to satisfy a certain ball intersection property. Below we show that the class of Banach spaces given by the previous theorem also enjoy this ball intersection property.
\begin{prop}
Let $X$ be an ultrasummand. Then each family of closed balls in $X$ such that any finite subfamily of it has a non-empty intersection has a point common to all the balls.
\end{prop}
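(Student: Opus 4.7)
The plan is to lift the problem to the bidual $X^{**}$, where closed balls are weak$^{*}$-compact, and then descend back to $X$ via the projection afforded by the ultrasummand hypothesis. Let $\{B(x_\alpha, r_\alpha)\}_{\alpha \in I}$ be the given family of closed balls in $X$ with the finite intersection property, and write $J_X : X \to X^{**}$ for the canonical embedding, $\hat X = J_X(X)$, and $P : X^{**} \to \hat X$ for the continuous linear projection guaranteed by the hypothesis.

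First, transfer the family to $X^{**}$ by considering the closed balls $B(\hat x_\alpha, r_\alpha)$, where $\hat x_\alpha = J_X(x_\alpha)$. Each such ball is bounded and weak$^{*}$-closed in the dual space $X^{**} = (X^{*})^{*}$, hence weak$^{*}$-compact by the Banach--Alaoglu theorem. Any finite intersection of balls in the original family embeds into the corresponding finite intersection in $X^{**}$, so the lifted family inherits the finite intersection property; weak$^{*}$-compactness then gives $\bigcap_{\alpha \in I} B(\hat x_\alpha, r_\alpha) \neq \emptyset$ in $X^{**}$. Fix any $y^{**}$ in this intersection.

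Next, apply $P$ to produce a candidate point in $X$. Since $\hat x_\alpha \in \hat X$, we have $P\hat x_\alpha = \hat x_\alpha$, so
\[
P(y^{**}) - \hat x_\alpha \,=\, P(y^{**} - \hat x_\alpha) \qquad \text{for every } \alpha \in I,
\]
and putting $x = J_X^{-1}(P(y^{**}))$ yields $\|x - x_\alpha\| \le \|P\|\, r_\alpha$. When $P$ is contractive -- as holds for Hilbert spaces, for $1$-injective spaces, and more generally whenever $X$ is $1$-complemented in its bidual -- this immediately places $x$ in every $B(x_\alpha, r_\alpha)$, completing the argument.

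The main obstacle is therefore the general ultrasummand case with $\|P\| > 1$, where a direct projection overshoots the radii. The anticipated workaround is to replace the weak$^{*}$-compactness step by the vector-valued invariant mean of Proposition \ref{prop1}, applied to the commutative (hence amenable) semigroup $S = \mathcal{F}(I)$ of finite subsets of $I$ under union. One selects $x_F \in \bigcap_{\alpha \in F} B(x_\alpha, r_\alpha)$ for each $F \in S$ (non-empty by hypothesis), forms the bounded selection $f(F) = x_F$, and feeds it into an $X$-valued invariant mean $M$. The semilattice identity $\{\alpha\} \cup F \supseteq \{\alpha\}$ forces every left-translate past $\{\alpha\}$ into $B(x_\alpha, r_\alpha)$, and left-invariance of $M$ propagates this to place $M(f)$ inside each ball. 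Arranging this final step so that the projection does not reintroduce a factor of $\|P\|$ is the delicate point and the main place where the argument has to be executed carefully.
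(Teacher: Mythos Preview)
Your second approach---feeding a selection function on the commutative semilattice $S=\mathcal{F}(I)$ of finite subsets under union into an $X$-valued invariant mean---is exactly the route the paper takes. The paper encodes $S$ as $\{f:\Lambda\to\{0,1\}: f^{-1}(1)\text{ finite}\}$ with coordinatewise maximum, fixes one index $i_0$ to ensure boundedness, chooses $y_J\in\bigcap_{i\in J}B_{r_i}(x_i)\cap B_{r_{i_0}}(x_{i_0})$ for each finite $J$, sets $F(f)=y_J$ where $J=f^{-1}(1)$, and then invokes Theorem~\ref{th10} to obtain an invariant mean $M:B(S,X)\to X$. For each $k\in\Lambda$ the translate $F^{h}$ with $h=\chi_{\{k\}}$ has all its values in $B_{r_k}(x_k)$, and the paper concludes $M(F)=M(F^{h})\in B_{r_k}(x_k)$.

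The difficulty you isolate in your final paragraph is precisely the step the paper does not justify. From $F^{h}(S)\subset B_{r_k}(x_k)$ and $M(x_k\chi_S)=x_k$ one gets only $\|M(F^{h})-x_k\|\le\|M\|\,r_k$, and the construction of $M$ in Proposition~\ref{prop1} yields merely $\|M\|\le\|P\|$; Theorem~\ref{th10} as stated carries no norm-one guarantee. The paper simply asserts the containment without addressing this factor. So your proposal is not more incomplete than the paper's own argument---you have identified the same gap that the paper passes over in silence. Your first approach (weak$^*$-compactness in $X^{**}$ followed by projection) is not the paper's method, but as you already note it runs into the identical $\|P\|$-obstruction, so it offers no independent way around the problem.
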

\begin{proof}
Let $\{B_{r_i}(x_i):i\in\Lambda\}$ be a family of closed balls such that for each finite subfamily $J\subset \Lambda,$ we have
$\cap_{i\in J}B_{r_i}(x_i)\neq \phi.$ To show that $\cap_{i\in\Lambda}B_{r_i}\neq \phi,$ consider the semigroup $S$ defined as follows:
\begin{align*}
S=\{f:\Lambda\to\{0,1\};f^{-1}(1) ~\text{is finite}\}
\end{align*}
where
\begin{align*}
(f+g)(i)=\begin{cases} &0,\quad \text{if}~f(i)=g(i)=0\\&1,~~\quad~\text{otherwise}\end{cases}.
\end{align*}

Let $i_0\in\Lambda$ and for each subset $J\subset \Lambda,$ pick $y_j\in \cap_{i\in J }B_{r_i}(x_i)\cap B_{i_0}(x_{i_0}).$ Define $F:S\to X$ by the formula: $F(f)=y_j$ where $J=\{j\in\Lambda; f(i)=1\}.$ Clearly $F\in B(S,X).$ By Theorem \ref{th10}, we can choose an invariant mean $M:B(S,X)\to X.$ Fix $k\in\Lambda$ and consider the function $h=\chi_{\{k\}}$ in $S.$ It follows that for each $g\in S,$ $(g+h)(k)=1$ and, therefore, $k\in J_{0}=\{j\in\Lambda; (g+h)(j)=1\}.$ Finally, uisng translation invariance of $M,$ we get $M(F)=M(F^{h})\in B_{k}(x_k)$ and this completes the proof.
\end{proof} 
\begin{remark}
\textnormal{ It is not known if the converse of the previous proposition holds. However, the converse holds for Banach spaces not containing copies of $\ell_1.$ In view of Theorem \ref{th10}, therefore, the problem reduces to the following. }
\end{remark}
\noindent Problem 1:  Let $X$ be a Banach space such that each family of closed balls in $ X$ has a non-empty intersection whenever each finite subfamily of it has it. Does it follow that for each amenable semigroup $S,$ there exists an $X$-valued invariant mean on $S$?

We now sketch a proof involving an affirmative answer to Question 1 which holds for a more general class of Banach spaces that include reflexive Banach spaces.
\begin{theorem}
Let $S$ be a (left) amenable group and $X=Z^{*}$ a dual Banach space. Let $W_{wbc} (X)$ be the family of all nonempty weak$^{*}$ closed, bounded convex subsets of $X$ and $F:S\to W_{wbc} (X)$ a given multi-valued map. Then $F$ admits an additive selection if (and only if) there exists a function $f:S\to X$ such that
\begin{align}\label{star4}
f(x+t)-f(t)\in F(s),\quad s,t\in S.
\end{align}
\end{theorem}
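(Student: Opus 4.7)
The plan is to combine Proposition \ref{prop1} with the refinement in Remark \ref{r7} to construct the additive selection via a vector-valued invariant mean applied to the family of ``difference functions'' associated with $f$. The ``only if'' direction is immediate, since an additive selection $g$ of $F$ itself satisfies $g(s+t)-g(t)=g(s)\in F(s)$, so one may take $f=g$ in \eqref{star4}. Henceforth we address the converse.

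Given $f$ satisfying \eqref{star4}, for each $s\in S$ define $F_s\colon S\to X$ by $F_s(t)=f(s+t)-f(t)$. By hypothesis $F_s(t)\in F(s)$ for every $t\in S$, and since $F(s)$ is bounded we have $F_s\in B(S,X)$. Because $S$ is (left) amenable and $X=Z^{*}$ is an ultrasummand, Proposition \ref{prop1} supplies a left-invariant mean on $B(S,X)$; more importantly, the construction described in Remark \ref{r7} yields such an $M$ with the crucial selection property $M(h)\in\overline{\textnormal{wco}}(h(S))$ (the weak$^{*}$-closed convex hull of $h(S)$) for every $h\in B(S,X)$. Put $g(s):=M(F_s)$. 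Since $F_s(S)\subset F(s)$ and $F(s)$ is weak$^{*}$-closed, bounded and convex, $\overline{\textnormal{wco}}(F_s(S))\subset F(s)$, and hence $g(s)=M(F_s)\in F(s)$; i.e.\ $g$ is a selection of $F$.

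For additivity the key algebraic identity is
$$F_{s_1+s_2}(t)=\bigl[f(s_1+(s_2+t))-f(s_2+t)\bigr]+\bigl[f(s_2+t)-f(t)\bigr]=F_{s_1}(s_2+t)+F_{s_2}(t),$$
i.e.\ $F_{s_1+s_2}=(F_{s_1})_{s_2}+F_{s_2}$, where $h_{s_2}(t):=h(s_2+t)$ is the left translate by $s_2$. Applying the linear, left-invariant map $M$ gives $g(s_1+s_2)=M((F_{s_1})_{s_2})+M(F_{s_2})=M(F_{s_1})+M(F_{s_2})=g(s_1)+g(s_2)$, completing the proof.

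The main obstacle, and the real content of the theorem, is securing a \emph{single} invariant mean on the whole of $B(S,X)$ that simultaneously enjoys the selection property $M(h)\in\overline{\textnormal{wco}}(h(S))$. For reflexive $X$, Theorem \ref{thm9} delivers this on the subspace $B_{wc}(S,X)$, which need not contain the functions $F_s$; for general ultrasummands Proposition \ref{prop1} provides a mean but with no a priori range control. It is precisely the dual-space hypothesis $X=Z^{*}$, via the Hahn--Banach separation argument of Remark \ref{r7}, that simultaneously yields both properties, and this is exactly what allows the construction of $g$ to go through.
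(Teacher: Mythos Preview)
Your argument is correct and follows essentially the same route as the paper: define the difference functions $f_{(s)}(t)=f(s+t)-f(t)$, apply the dual-space invariant mean of Remark~\ref{r7} (with its weak$^{*}$ range-control property) to obtain $g(s)=M(f_{(s)})\in F(s)$, and use the telescoping identity together with left invariance of $M$ to get additivity. The only cosmetic differences are that the paper invokes the uniform boundedness principle where you directly use the boundedness of $F(s)$, and it does not route through Proposition~\ref{prop1} since Remark~\ref{r7} already furnishes the mean for dual spaces.
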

\begin{proof}
If $g$ is an additive selection of $F,$ then $g(s+t)-g(t)=g(s)\in F(s)~,s,t\in S$ and the given condition \eqref{star4} is satisfied. Conversely, assume that \eqref{star4} holds for a mapping $f:S\to X.$ By Remark \ref{r7}, there exists a (left 'quasi') invariant mean $M:B(S,X)\to X$ such that $M(g)\in \overline{\text{wco}}(g(S))$ for each $g\in B(S,X).$ For $s\in S,$ define
\begin{align*}
f_{(s)}(t)=f(s+t)-f(t), \quad t\in S.
\end{align*}
By \eqref{star4}, $ f_{(s)} (t)\in F(s)$  for each $t\in S.$ The principle of uniform boundedness applies to yield that f$_{(s)} \in B(S,X).$ 
Define
\begin{align*}
g(s)=M(f_{(s)}),~s\in S.
\end{align*}
It follows that $g(s)\in\overline{\text{wco}}(f_{(s)} (S)) \subset \overline{\text{wco}}(F(s)) =F(s),$ showing thereby that $g$ is indeed a selection of $F.$ Further, for $u,v$ and $t\in S,$ we note that
\begin{align*}
f_{(u+v)}(t)&=f(t+u+v)-f(t)\\ &=f(t+u+v)-f(t+v)+f(t+v)-f(t)\\ &=f_{(u)}^{v}(t)+f_{(v)}(t).
\end{align*}
From this it follows easily that $h$ is additive. Indeed, by the left invariance of $M,$ we have
\begin{align*}
g(u+v)=&M(f_{(u+v)})=M(f_{(u)}^{v}+f_{(v)})+M(f_{(u)}^{v})+M(f_{(v)})\\&=M(f_{(u)})+M(f_{(v)})=g(u)+g(v).
\end{align*}
\end{proof}
\begin{corollary}
Assume that $X$ is a dual space and $f:S\to X $ a mapping satisfying
\begin{align}\label{star5}
\|f(x+y)-f(x)-f(y)\|\leq \epsilon, \quad \text{for all}~ x,y\in S,
\end{align}
for some $\epsilon>0.$ Then there exists an additive mapping $g:S\to X$ such that  $\|f(x)-g(x)\|\leq \epsilon,$ for all $x\in S.$
\end{corollary}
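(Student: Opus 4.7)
The plan is to deduce the corollary directly from the preceding theorem by wrapping the approximate additivity of $f$ into an appropriately chosen multi-valued map whose values are weak$^{*}$-closed balls.

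First I would define $F\colon S\to W_{wbc}(X)$ by
\[
F(s)=\{x\in X:\|x-f(s)\|\leq\epsilon\},\qquad s\in S,
\]
i.e.\ the closed $\epsilon$-ball around $f(s)$. Since $X=Z^{*}$ is a dual space, the Banach--Alaoglu theorem guarantees that each such ball is weak$^{*}$-compact, hence weak$^{*}$-closed; it is automatically bounded and convex, so $F(s)\in W_{wbc}(X)$ for every $s\in S$.

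Next I would verify the selection hypothesis \eqref{star4} of the preceding theorem with the candidate function taken to be $f$ itself. For all $s,t\in S$, the assumption \eqref{star5} reads
\[
\|(f(s+t)-f(t))-f(s)\|\leq\epsilon,
\]
which is exactly the statement $f(s+t)-f(t)\in F(s)$. Assuming, as is standard in this circle of results and as the preceding theorem requires, that $S$ is left amenable, the theorem then produces an additive selection $g\colon S\to X$ of $F$. Unwinding what ``selection'' means, $g(s)\in F(s)$ for every $s\in S$, which is precisely the inequality $\|f(s)-g(s)\|\leq\epsilon$ asserted by the corollary.

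The genuine work has already been carried out inside the preceding theorem, via the vector-valued ``quasi-invariant'' mean furnished by Remark \ref{r7}; here all that had to be done was to package \eqref{star5} as an instance of \eqref{star4}. Consequently I do not expect any serious obstacle. The only points requiring care are (a) checking that the values of $F$ indeed live in $W_{wbc}(X)$, which is where duality of $X$ is used in an essential way, and (b) the implicit need for $S$ to be left amenable, without which the selection machinery of the preceding theorem is unavailable.
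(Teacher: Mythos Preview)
Your proof is correct and follows essentially the same route as the paper: define $F(s)$ to be the closed $\epsilon$-ball about $f(s)$, use Banach--Alaoglu to place $F(s)$ in $W_{wbc}(X)$, rewrite \eqref{star5} as $f(s+t)-f(t)\in F(s)$, and invoke the preceding theorem to obtain the additive selection $g$. Your remark that left amenability of $S$ is an unstated but necessary hypothesis is also apt.
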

\begin{proof}
Consider the multi-valued map $F:S\to W_{wbc} (X)$ defined by
\begin{align*}
F(x)=B_{\epsilon}(f(x))=\{z\in X:\|z-f(x)\|\leq \epsilon\}.
\end{align*}
$F$ is well defined by virtue of Banach Alaoglu theorem, so each set $F(x)$ being weak$^{*}$- compact, is weak$^{*}$-closed, bounded and also convex. By \eqref{star5}, $f(x+y)-f(y)\in F(x)$ and, therefore, by the previous theorem there exists an additive selection $g$ of $F,$ that is$ \|f(x)-g(x)\|\leq \epsilon,$ for all $x\in S.$
\end{proof}
\begin{remark}
\textnormal{
In view of the above theorem, it is tempting to surmise that Hyers-Ulam stability might also hold for the Cauchy equation defined on an amenable semigroup and taking values in an ultrasummand. Unfortunately, the above method as applied in the case of a dual Banach space cannot be used for functions taking values in an ultrasummand. This is because in the case of dual Banach spaces $X,$ the condition: $M(f)\in\overline{\text{wco}}  (f(S))$ which follows from the definition of an invariant mean $M$ on $B_{wbc} (S,X)$ using an invariant mean $m$ on $S$ satisfies the equation: $x^{*} (M(f))=m(x^{*}\circ f),~x^{*} \in X^{*}.$ However, it can be shown that the analogous equation: $\hat{M} (f)(x^{*})=m(x^{*} \circ f),~ x^{*}\in X^{*}$ which is used to define an invariant mean on an ultrasummand $X$ yields the weaker assertion that $M(f)\in \overline{\text{wco}}  (f(S)),$  where the weak$^{*}$ closure is taken in the bidual $X^{**}$ of $X$ and that there is no guarantee that the set is mapped onto a weakly closed subset of $X$ by the projection $P:X^{**}\to X.$ This suggests the following problem which seems to be open.  }
\end{remark}
\noindent Problem 2: Does the stability of the Cauchy equation hold in an ultrasummand?.  
 
Let us record below the main ingredient in the proof of the above theorem of Badora on the stability of Cauchy's equation in injective Banach spaces which depends upon the following refinement of Ger's theorem on the existence of an invariant mean as a selection of a multi-valued map.
\begin{theorem}\textnormal{\cite{2}}
Let $X$ be an injective Banach space and $S$ a (left) amenable semigroup. Further, let $F:B(S,X)\to C$ be a map where $C$ is the family of all subsets of $X$ which are intersections of closed balls containing $f(S), f\in B(S,X).$ Then there exits an invariant mean $M:B(S,X)\to X$ such that $M(f)\in F(f),~f\in B(S,X).$
\end{theorem}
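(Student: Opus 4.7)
The plan is to mimic the construction from Proposition~\ref{prop1} but to leverage the stronger hypothesis of injectivity (as opposed to merely being an ultrasummand) so as to ensure that the resulting mean lands inside the prescribed set $F(f)$. The argument splits into a routine linearity-and-invariance part and a single geometric step where injectivity does the real work.

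First, since $S$ is (left) amenable, fix a (left) invariant mean $m$ on $\ell_{\infty}(S)$ and define $\hat{M}:B(S,X)\to X^{**}$ by formula~\eqref{star3}, namely $\hat{M}(f)(x^{*})=m(x^{*}\circ f)$ for $x^{*}\in X^{*}$. Then $\hat{M}$ is linear, left-invariant, and sends $x\chi_S$ to $J_X(x)$. Next, I invoke Nachbin's characterization: $X$ is injective if and only if the identity on $X$ extends to a norm-one linear map $P:X^{**}\to X$; equivalently, $X$ is $1$-complemented in its bidual by a projection of norm exactly one. Set $M=P\circ\hat{M}$, after identifying $J_X(X)$ with $X$. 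Linearity, (left) invariance, and the condition $M(x\chi_S)=x$ all pass from $\hat{M}$ to $M$ immediately.

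The crucial step is to verify $M(f)\in F(f)$ for each $f\in B(S,X)$. Let $B_r(y)\subset X$ be any closed ball containing $f(S)$. For every $x^{*}\in X^{*}$ with $\|x^{*}\|\le 1$,
$$\bigl|\hat{M}(f)(x^{*})-x^{*}(y)\bigr|=\bigl|m\bigl(x^{*}\circ(f-y\chi_S)\bigr)\bigr|\le \|x^{*}\circ(f-y\chi_S)\|_{\infty}\le r,$$
so $\|\hat{M}(f)-J_X(y)\|_{X^{**}}\le r$. Applying $P$, using $P(J_X(y))=y$ and $\|P\|=1$, yields $\|M(f)-y\|\le r$, that is $M(f)\in B_r(y)$. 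Since $F(f)$ is, by definition, an intersection of closed balls in $X$ each containing $f(S)$, we conclude $M(f)\in F(f)$.

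The only delicate point I foresee is precisely the role of $\|P\|=1$: were $P$ only known to be bounded (the situation for a general ultrasummand), the last inequality would degrade to $\|M(f)-y\|\le\|P\|r$, inflating each ball and in general pushing $M(f)$ out of $F(f)$. This is the structural reason the theorem requires injectivity rather than the weaker ultrasummand property, and is the single place in the argument where something nontrivial is being used; everything else is bookkeeping about transferring properties of $m$ through $\hat{M}$ and $P$.
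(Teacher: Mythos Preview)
The paper does not supply its own proof of this statement; it merely records the theorem and attributes it to Badora~\cite{2}. So there is no in-paper argument to compare against directly.

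Your proof is correct and is essentially the natural one: push the scalar invariant mean $m$ through the evaluation pairing to obtain $\hat{M}:B(S,X)\to X^{**}$ as in \eqref{star3}, use that an injective space is $1$-complemented in its bidual to project back via a norm-one $P$, and then observe that $\|P\|=1$ preserves membership in every closed ball $B_r(y)\supset f(S)$. The geometric step is exactly right, and your closing remark pinpointing $\|P\|=1$ as the single place where injectivity (rather than the weaker ultrasummand hypothesis of Proposition~\ref{prop1}) is genuinely needed is on target.

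One small correction of wording, not of substance: what you label ``Nachbin's characterization'' is misstated. It is not true that $X$ is injective \emph{if and only if} $X$ is $1$-complemented in $X^{**}$ by a norm-one projection; every reflexive space satisfies the latter trivially. Only the forward implication holds, and that is all your argument uses, so the proof itself is unaffected. Nachbin's theorem proper characterizes $1$-injectivity via the binary intersection property of closed balls; the implication you actually need (injective $\Rightarrow$ norm-one projection $X^{**}\to X$) follows immediately from applying the definition of injectivity to the inclusion $J_X:X\hookrightarrow X^{**}$ and the identity map on $X$.
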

\noindent e. \textit{Banach limits and the structure of Banach spaces}

The following example provides a simple application of how Banach limits are typically used in Banach space theory to extract useful information about their structure.

Notation: Given a bounded function $f\in B(S,X)$ and an invariant mean $M$ on $B(S,X),$ in what follows it will be more convenient to use the integral symbol for the action of $M$ on $f:$ thus $M(f)$ shall be denoted by $\int_{X} fdx$ or by $\int_{X} f(x)dx$ to indicate the dummy variable.  
\begin{example}
\textnormal{
Let $X$ be a Banch space and denote by $\textnormal{Lip}(X)$ the space of all \textit{Lipschitz} functions $f:X\to \mathbb{R},$ vanishing at a distinguished point, say $\theta\in X.$ It is easily checked that under pointwise operations, $\textnormal{Lip}(X)$ is a Banach space when equipped with the norm:
\begin{align*}
\|f\|=\underset{x\neq y}{\sup}\dfrac{|f(x)-f(y)\|}{d(x,y)}.
\end{align*}
Further, we note that $\text{Lip}(X)$ contains $X^{*}$ isometrically as a subspace and more importantly, as a complemented subspace. Indeed, let $M$ be an invariant mean on $X.$ Using the shift invariance of $M,$ it is easily checked that the map $P:\text{Lip}(X)\to X^{*}$ given by 
\begin{align*}
P(f)(x)=\int_{X} f(x+y)-f(y) dy,~~f\in\text{Lip}(X),~x\in X
\end{align*}
defines a linear continuous projection.}
\end{example}
A highly nontrivial application of Banach limits pertains to the existence of Lipschitz selections involving certain set-valued maps defined on a metric space and taking values in the collection $H(X)$ consisting of nonempty, closed, bounded and convex subsets of a Banach space $X$ equipped with the Hausdorff metric:
\begin{align*}
d_{H}(A,B)=\sup(\{d(x,A);x\in B\}\cup\{d(x,B);x\in A\}).
\end{align*}
The well-known Michael's selection theorem ([see \cite{3}, Chapter 1])says that given a Banach space $X,$ it is always possible to choose a selector $\psi:H(X)\to X,$ (i.e. $\psi(A)\in A$ for each $A\in H(X)$) which is always continuous. Equivalently, for each metric space $M$ and a multivalued map  $F:M\to H(X),$ there exists a 'selection' map $f:M\to X$ of $F$ in the sense that $f(x)\in F(x)$ for each $x\in M.$

This motives the next question which is natural:

\noindent \textbf{Question}: Given a Banach space $X,$ does there exist a Lipschitz (uniformly continuous) selector? \\
Answer: Yes, if $X$ is finite dimensional.\\
\noindent \textbf{Conjecture}: The above statement also holds for all infinite dimensional Banach space $X$!

However, the question was settled in the negative by Positselskii \cite{11A} in 1971 and independently (for the uniformly continuous case) by Przeslawskii and Yost \cite{10} in 1989.

In the case of finite dimensional spaces, the desired Lipschitz selector is given by the Steiner point map $s:H(\ell_{2}^{n})\to \ell_{2}^{n}$ which is defined by (the vector valued integral):
\begin{align*}
s(A)=n\int_{X} h_A(x)xd\sigma (x),
\end{align*}
where the integral is computed over the unit sphere $S^{n-1}$ in $\ell_{2}^{n}$ with respect to $\sigma,$ the normalised Lebesgue measure on $S^{n-1}$ and $h_A$ is the support functional of $A$ given by
\begin{align*}
h_A(x)=\sup\{\langle x,a\rangle:a\in A\}, ~~x\in S^{n-1}.
\end{align*}
It can be checked that $s(A)\in\ell_{2}^{n},$ so $s$ defines a selector.  Further $s$ is Lipschitz with its Lipschitz constant being given by: $2\pi^{-1/2}\Gamma(n/2+1)/\Gamma((n+1)/2)).$ (For a proof of these assertions, see \cite[Chapter 2]{3}. See also \cite{11A}).
\begin{theorem}\label{th13}
Given an infinite dimensional Banach space $X,$ there is no selector $H(X)\to X$ which is Lipschitz.
\end{theorem}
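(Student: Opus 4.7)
The plan is to argue by contradiction, reducing the infinite-dimensional problem to a finite-dimensional one via Dvoretzky's theorem and then invoking a sharp lower bound on the Lipschitz constant of selectors on $H(\ell_2^n)$. Suppose $s\colon H(X)\to X$ is an $L$-Lipschitz selector. A key preliminary observation is that for any closed subspace $Y\subset X$, the restriction $s|_{H(Y)}$ automatically takes values in $Y$, since $s(K)\in K\subset Y$ for each $K\in H(Y)$; hence $s|_{H(Y)}\colon H(Y)\to Y$ is itself an $L$-Lipschitz selector. By Dvoretzky's theorem, for every $n\in\mathbb{N}$ and $\epsilon>0$ the infinite-dimensional $X$ contains a subspace $Y_n$ that is $(1+\epsilon)$-isomorphic to $\ell_2^n$; transporting $s|_{H(Y_n)}$ through this isomorphism yields a Lipschitz selector on $H(\ell_2^n)$ with Lipschitz constant at most $L(1+\epsilon)^2$.

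The central step is a finite-dimensional lower bound: every Lipschitz selector on $H(\ell_2^n)$ has Lipschitz constant at least $c_0\sqrt{n}$ for an absolute constant $c_0>0$. To establish this, one would first symmetrise using the compact orthogonal group $O(n)$ acting isometrically on $\ell_2^n$---integration against Haar measure here plays the role of the finite-dimensional analogue of the Banach-limit construction that recurs throughout this survey---to produce, with the same Lipschitz bound, an $O(n)$-equivariant selector. On sufficiently symmetric bodies this equivariant selector must coincide (up to a normalising scalar) with the Steiner point, whose Lipschitz constant is the explicitly computable quantity $2\pi^{-1/2}\Gamma(n/2+1)/\Gamma((n+1)/2)$, of order $\sqrt{n}$ as already recorded above. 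One then exhibits explicit pairs of convex bodies in $\ell_2^n$---for instance, small rotational perturbations of an axially symmetric body---on which equivariance forces the selector values to separate at the $\sqrt{n}$ rate. Combining with the reduction yields $c_0\sqrt{n}\leq L(1+\epsilon)^2$ for every $n$, the desired contradiction.

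The main obstacle lies in the finite-dimensional lower bound itself: while the $\sqrt{n}$ growth is a direct calculation for the Steiner point, the assertion that it is \emph{forced} on every Lipschitz selector on $H(\ell_2^n)$ requires a delicate averaging-and-separation argument and is the technical core of the original proofs of Positselski \cite{11A} and Przeslawski--Yost \cite{10}. Once this finite-dimensional bound is in hand, the passage from $\ell_2^n$ to a general infinite-dimensional $X$ via Dvoretzky is essentially routine.
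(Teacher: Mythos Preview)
Your overall strategy—reduce to $\ell_2^n$ via Dvoretzky and then invoke a lower bound on Lipschitz constants of selectors on $H(\ell_2^n)$—is sound, and the Dvoretzky step (your Step~3) matches the paper's Step~3 almost verbatim. The genuine divergence is in how the finite-dimensional blow-up is obtained. You aim for a direct quantitative bound $c_0\sqrt{n}$ by averaging over $O(n)$ with Haar measure and comparing with the Steiner point; this is essentially the Positselski/Przes\l awski--Yost line, and you correctly flag that the step ``equivariance forces Steiner-point behaviour on suitable test bodies'' is the delicate core that you are outsourcing to \cite{11A,10}. The paper takes a different, more ``soft'' route that is tailored to the survey's theme: Step~1 observes that $H(X)$ is an absolute Lipschitz retract, so a Lipschitz retraction $H(X)\to X$ would force $X$ itself to be an ALR, i.e.\ $1$-injective—impossible for an infinite-dimensional Hilbert space. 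Step~2 then shows $L(r_n)\to\infty$ \emph{indirectly}: assuming $\sup_n L(r_n)<\infty$, one picks a vector-valued invariant mean $M\colon B(\mathbb{N},\ell_2)\to\ell_2$ (Theorem~\ref{thm9}) and defines $r(A)=M\bigl(n\mapsto r_n(P_n A)\bigr)$, which is checked to be a Lipschitz retraction $H(\ell_2)\to\ell_2$, contradicting Step~1.

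In short: both arguments share the Dvoretzky reduction, but where you symmetrise over the \emph{compact} group $O(n)$ to squeeze out an explicit $\sqrt{n}$, the paper instead uses a Banach limit on the \emph{amenable} semigroup $\mathbb{N}$ to glue the $r_n$ into a single retraction on $H(\ell_2)$ and then appeals to non-injectivity of $\ell_2$. Your approach yields sharper quantitative information but leans on a hard finite-dimensional lemma you do not prove; the paper's approach avoids that lemma entirely, gives no rate, and—crucially for a survey on Banach limits—puts the invariant-mean machinery at centre stage.
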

\begin{proof}(Sketch): The proof depends upon the following ingredients:                                                    
\begin{enumerate}
\item[(a)]  $H(X)$ is an absolute Lipschitz retract (ALR). More precisely, $H(x)$ is an 8-(ALR).
\item[(b)] 	Dvoretzky's spherical sections theorem: Dvoretzky's spherical sections theorem: Each infinite dimensional Banach space $X$ contains for each $n,$ an $n$-dimensional subspace $X_n$ which is 2-isometric with $\ell_{2}^{n}.$
\end{enumerate}
The statement in (a) was proved by Przeslawskii and Yost \cite{11} whereas (b) is a well-known result in the local theory of Banach spaces which was proved by Dvoretzky in 1960. (See, e.g.  \cite{3}, Theorem 12.10 for a proof).

We break the proof into several steps.

Step 1: Here we show that the theorem holds for the special case where $X$ is an infinite dimensional Hilbert space. In fact, it follows from (a) that there does not even exist a Lipschitz retraction $H(X)\to X.$ This follows because (a) yields that $X$ is an (ALR) (=1-injective) if (and only if) there exists a Lipschitz retraction $r:H(X)\to X.$  But it is well known that a Hilbert space can never be injective, unless it is finite dimensional. One of the ways to see this is to note that $\ell_\infty$ contains $\ell_2$ isometrically and that each bounded linear operator from $\ell_\infty$ into $\ell_2$ is 2-summing. Thus assuming $\ell_2$ to be injective leads to the identity map on $\ell_2$ being 2-summing which is impossible. Alternatively, the presumed injectivity of $X$ combined with the Banach Alaoglu theorem yields $X$ as a complemented subspace of $C(K),$ with $K$ being the (weak$^{*}$-compact) unit ball of $X^{*}.$ Recalling the well-known fact that the Banach space $C(K)$ has the Dunford-Pettis property and that this property is inherited by complemented subspaces, it follows that (the Hilbert space) $X$ has it, which is impossible.

Step 2: Given a sequence of Lipschitz retracts $r_n:H(\ell_{2}^{n})\to \ell_{2}^{n},$ then $L(r_n)\to \infty.$\\
\textit{Proof}: Suppose, to the contrary, there exists $c>0$ such that $L(r_n)\leq c$ for all $n\geq 1,$i.e., $\|r_n (A)-r_n (B)\|\leq cd_H (A,B)$ for all $A,B\in H(\ell_{2}^{n})$ and $n\geq 1.$ Let $P_n: \ell_{2}\to\ell_{2}^{n}$ be the natural projection where we identify $x= (x_i)_{i=1}^{\infty}\in \ell_2$ with $x^{(n)}=(x_i)_{i=1}^{n}\in \ell_{2}^{n}:P_{n}(x)=x^{(n)}.$ Choose an invariant mean $M:B(\ell_2)\to \ell_2$ : $M(f)=\int_{X} f dn\in\overline{ \text{co}}(f(\mathbb{N}))$ for $f\in B(\ell_2).$ Such an invariant mean exists by Theorem \ref{thm9}. Define $r:H(\ell_2 )\to \ell_2$ by 
\begin{align*}
r(A)=\int_{X} g_A(n)dn
\end{align*}
where $g_A (n)=r_n (P_n (A)).$ Note that for $x\in \ell_2,$ since $x^{(n)}\to x,$ we get
\begin{align*}
r(x)=\int_{X} g_x dn\in\overline{\text{co}}(g_x(\mathbb{N}))=\overline{\text{co}}(x^{(n)}; n\geq 1).
\end{align*}
\end{proof}
By shift invariance of $M,$ we note that $r(x)\in\overline{\text{co}} (x^{(n)};n\geq k),$ for all $k\geq 1.$ This gives
\begin{align*}
r(x)\in\cap_{k=1}^{\infty}\overline{\text{co}}(x^{(n)}; n\geq k)=x.
\end{align*}
Thus $r$ is a retraction. Finally, given $A,B\in H(\ell_2 )$ and using that $\|\int_{X} dn\|=1,$ we get 
\begin{align*}
\|r(A)-f(B)\|&=\left|\int_{X} r_n(P_n(A))-r_n(P_n(B)) dn\right\|\\&\leq \|r_n(P_n(A))-r_n(P_n(B))\|\\&\leq cd_H(P_n(A),P_n(B))\leq cd_H(A,B).
\end{align*}
In other words, $r$ is a Lipschitz retraction, contradicting the assertion proved in Step 1.

Step 3: Suppose to the contrary that there exists a Lipschitz retraction $r:H(X)\to X.$ Choose $X_n$ according to (b). Thus there exists a linear isomorphism $\phi_n:\ell_{2}^{n}\to X_n$ such that $\|\phi_n\| \|\phi_{n}^{-1}\|<2.$ Denote the restriction of $r$ on $X_n$ by $s_n$ and define $\Psi:H(\ell_{2}^{n})\to \ell_{2}^{n}$ by $\Psi(A)=\phi_{n}^{-1} (A).$ Then 
$r_n:H(\ell_{2}^{n})\to \ell_{2}^{n}$ defined by: $r_n (A)=\phi_n \circ s_n \circ \Psi(A)$ is seen to be a Lipschitz retraction such that 
\begin{align*}
\|r_n(A)-r_n(B)\|\leq 2L(r)d_H(A,B)
\end{align*}
 and so, $L(r_n)\leq 2L(r).$ But this contradicts the assertion in Step 2 and the proof is completed.

\begin{remark}
\textnormal{
With some extra effort, it can also be deduced that for an infinite dimensional Banach space $X,$ there does not exist even a uniform selector: $H(X)\to X.$  Also, there does not even exist a Lipschitz selector $K(X)\to X,$ where $K(X)$ denotes the sub-collection of $H(X)$ consisting of compact sets. This motivates the following question.
}
\end{remark}
\noindent \textbf{Question}:  Under certain conditions on $X,$ is it possible to choose uniform retractions $K(X)\to X.$
\begin{theorem}
Given that $X$ is a uniform retract of $K(X),$ then $X^{**}$ is injective.
\end{theorem}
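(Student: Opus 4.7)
The plan is to leverage the uniform retraction $r\colon K(X)\to X$ together with the vector-valued invariant mean machinery developed earlier in the paper (notably Proposition \ref{prop1} and Remark \ref{r7}) in order to build a bounded linear projection onto $X^{**}$ from any isometric superspace. Injectivity of $X^{**}$ would then follow from its standard characterisation as the property of being complemented in every containing Banach space.

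A natural first step is to extend $r$ to a map $\tilde r$ defined on weak$^{*}$-compact convex subsets of bounded balls in $X^{**}$. By Goldstine's theorem every such subset $A$ is the weak$^{*}$-closure of a directed net $\{F_\alpha\}$ of finite subsets of $X$; since each $F_\alpha\in K(X)$, the value $r(F_\alpha)\in X\subset X^{**}$ is defined. Applying a Banach limit along $\{F_\alpha\}$ and taking weak$^{*}$-limits of $\{r(F_\alpha)\}$ should produce an element $\tilde r(A)\in X^{**}$, with uniform continuity of $r$ on bounded sets ensuring that the limit is independent of the approximating net. For singletons the retraction property $\tilde r(\{x^{**}\})=x^{**}$ is then immediate.

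Given an isometric embedding $X^{**}\hookrightarrow Z$, one would then build a candidate projection $P\colon Z\to X^{**}$ by associating to each $z\in Z$ a suitable weak$^{*}$-compact convex set $A(z)\subset X^{**}$ and setting $P(z)=\tilde r(A(z))$; linearity is recovered by averaging over the additive action via an invariant mean on $(Z,+)$, in the style of Proposition \ref{prop1}. An alternative route, in the spirit of the preceding theorem, would exploit that $K(X)$ itself is an absolute Lipschitz retract (used in the proof of Theorem \ref{th13}), so that the uniform retract property of $X$ propagates to an absolute uniform retract property for $X$ in the category of Banach spaces, from which bidual injectivity has been established in the nonlinear Banach space literature.

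The main obstacle is that uniform continuity (in contrast to the Lipschitz hypothesis considered in Theorem \ref{th13}) does not interact cleanly with weak$^{*}$-limits, so the Banach limit step in the construction of $\tilde r$ requires care: one must verify that the limit lies genuinely in $X^{**}$ rather than merely in $X^{****}$, and that the averaging procedure indeed yields a bounded linear projection rather than only a nonlinear retraction. Overcoming this is precisely where the vector-valued invariant mean machinery developed earlier in the paper becomes indispensable.
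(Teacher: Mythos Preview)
The paper states this theorem without proof, so there is nothing to compare your proposal against.

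On its own merits: your primary construction has a genuine gap. Uniform continuity of $r$ gives no global bound, so the net $\{r(F_\alpha)\}$ need not be bounded and the Banach-limit step need not produce a well-defined element of $X^{**}$, let alone one independent of the approximating net. More seriously, the step ``associate to each $z\in Z$ a suitable weak$^*$-compact convex set $A(z)\subset X^{**}$'' presupposes exactly the kind of retraction $Z\to X^{**}$ you are trying to build; for an arbitrary isometric superspace $Z\supset X^{**}$ there is no canonical such assignment. The averaging of Proposition~\ref{prop1} linearises a map that already lands in $X$; it does not manufacture one.

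Your ``alternative route'' is closer to the mark: the intended argument is indeed that $X$ inherits an absolute-uniform-retract property from $K(X)$, after which the classical fact (see \cite{3}) that a Banach space which is an absolute uniform retract has injective bidual finishes the job. Note, however, that the paper records the ALR property only for $H(X)$, not for $K(X)$; you would still need to supply the corresponding statement for $K(X)$ (or pass through a suitable retraction between $K(X)$ and $H(X)$) before this route is complete. As written, your proposal is a plan with the decisive step left unverified rather than a proof.
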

\noindent Problem 3: Does the converse of the previous statement hold?
\begin{remark}
\textnormal{ It follows from Theorem \ref{th13} and the discussion preceding it that the property involving the existence of a Lipschitz selector $H(X)\to X$ is a finite dimensional (FD) property in the class of Banach spaces: a property (P) is said to be a (FD) property if it holds for finite dimensional Banach spaces but fails in each infinite dimensional Banch space. A comprehensive study of this phenomenon was undertaken in \cite{15} where it was shown with the help of a host of examples that suitably formulated analogues of (FD) properties in the framework of Fr\'{e}chet spaces lead to new insights into the structure of the latter class of spaces. It is interesting to note that in a fairly large number of cases, these (FD) properties hold in this new setting exactly if the underlying Fr\'{e}chet space is nuclear! In the light of this discussion, the following question arises naturally.} \end{remark}

\noindent Problem 4: Given a nuclear Fr\'{e}chet space $X,$ does it follow that there exists a Lipschitz selector $H(X)\to X$? How about the converse?

Regarding extension of nonlinear maps, we have
\begin{theorem}\textnormal{(McShane, see \cite{16} for a proof)}
 Every Lipschitz function on a subset of a metric space can be extended to a Lipschitz function on the whole space.
\end{theorem}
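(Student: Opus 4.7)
The plan is to construct the extension explicitly via the classical inf-convolution formula of McShane. Let $(M,d)$ be the ambient metric space, $A\subseteq M$ a subset, and $f:A\to\mathbb{R}$ a Lipschitz function with constant $L$, so $|f(a)-f(b)|\le L\,d(a,b)$ for all $a,b\in A$. I would define
\[
F(x)=\inf_{a\in A}\bigl\{f(a)+L\,d(x,a)\bigr\},\qquad x\in M,
\]
and then verify three things: (i) $F$ takes finite values, (ii) $F|_A=f$, and (iii) $F$ is $L$-Lipschitz on $M$.

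For well-definedness, fix any $a_0\in A$; for any $a\in A$ the inequality $f(a)\ge f(a_0)-L\,d(a,a_0)$ together with the triangle inequality $d(a,a_0)\le d(a,x)+d(x,a_0)$ gives $f(a)+L\,d(x,a)\ge f(a_0)-L\,d(x,a_0)$, so the infimum is bounded below (and bounded above by taking $a=a_0$ when the choice is legal, else any admissible $a$). For the extension property, if $x\in A$ the choice $a=x$ shows $F(x)\le f(x)$, while for arbitrary $a\in A$ the Lipschitz bound on $f$ gives $f(a)+L\,d(x,a)\ge f(x)-L\,d(a,x)+L\,d(x,a)=f(x)$, yielding $F(x)\ge f(x)$.

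For the Lipschitz estimate, take $x,y\in M$ and any $a\in A$. Using the triangle inequality $d(x,a)\le d(x,y)+d(y,a)$, one obtains
\[
F(x)\le f(a)+L\,d(x,a)\le \bigl(f(a)+L\,d(y,a)\bigr)+L\,d(x,y).
\]
Taking the infimum over $a\in A$ on the right gives $F(x)\le F(y)+L\,d(x,y)$, and by symmetry $|F(x)-F(y)|\le L\,d(x,y)$.

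The only genuine subtlety, and hence the main place to be careful, is the well-definedness step: the formula mixes values of $f$ (which could be large negative numbers for points far from $x$) with the additive term $L\,d(x,a)$, and without invoking the Lipschitz property of $f$ on $A$ through the triangle inequality, there is no reason the infimum should be finite. Once this coercivity is in hand, the remaining two verifications are routine algebraic manipulations of the defining formula. (If a vector-valued target is desired, the same proof works coordinatewise when the target is $\mathbb{R}^n$ with the $\ell^\infty$ norm, but genuine Banach-space-valued extensions with the same Lipschitz constant require additional hypotheses and are not guaranteed by this argument.)
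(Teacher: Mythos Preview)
Your argument is the standard McShane inf-convolution construction and is entirely correct; the well-definedness, extension, and Lipschitz estimates are all verified cleanly, and the Lipschitz constant is preserved. Note, however, that the paper does not supply its own proof of this theorem at all---it simply cites \cite{16}---so there is nothing to compare your argument against within the paper itself; what you have written is in fact the classical proof one would find in the cited reference.
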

\begin{remark}
\textnormal{ 
Sometimes it becomes necessary to know if in the above theorem, the choice of the extension Lipschitz map could be effected in a continuous linear fashion; in other words, whether the 'extension' map $F: \text{Lip}(Z)\to \text{Lip}(X),$ i.e.,  $F(g) |_{Y}=g$ for each $g\in \text{Lip}(Z)$ can be chosen to be bounded and linear. This is obviously seen to be the case in Hilbert spaces. However, the question whether the converse of this statement also holds has recently been settled by the author in \cite{17}, whereas the linear case involving Hahn Banach extensions of linear functionals was proved by Fakhoury \cite{5} in 1972.
}
\end{remark}
\begin{theorem}\label{th18}\textnormal{(\cite{5}, see also \cite{6})}: 
Let $X$ be a Hilbert space. Then for each subspace $Y$ of $X,$ there exists a bounded linear 'extension' map $F: Y^{*}\to X^{*},$ i.e, $F(g) |_{Y}=g,$  $g\in Y^{*}.$ Conversely, given the existence of such a bounded linear extension map corresponding to each subspace $Y$ of $X,$ it holds that $X$ is a Hilbert space. 
\end{theorem}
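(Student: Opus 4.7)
The plan is to treat the two directions separately. The forward implication relies on the orthogonal projection and is essentially formal; the converse is substantial and requires converting the dual-side extension property into a complementation statement followed by a characterization of Hilbert space.

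For the forward direction, let $X$ be a Hilbert space, $Y$ a subspace, and $\overline{Y}$ its closure. Let $P : X \to \overline{Y}$ denote the orthogonal projection, the unique norm-one linear projection of $X$ onto $\overline{Y}$. Every $g \in Y^{*}$ extends uniquely by density and uniform continuity to $\widetilde{g} \in (\overline{Y})^{*}$ of the same norm. Defining
\begin{align*}
F(g) = \widetilde{g} \circ P, \qquad g \in Y^{*},
\end{align*}
one checks routinely that $F : Y^{*} \to X^{*}$ is linear (the density-extension $g \mapsto \widetilde{g}$ is linear, and composition with the fixed operator $P$ is linear), satisfies the extension condition $F(g)|_{Y} = g$, and is contractive since $\|F(g)\| \le \|\widetilde{g}\|\,\|P\| = \|g\|$. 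This settles the easy direction.

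For the converse, the strategy is to pass from the linear-extension hypothesis to a projection property and then invoke a characterization of Hilbert space. The first reduction is algebraic: for a closed subspace $Y \subseteq X$, the restriction map $R_{Y} : X^{*} \to Y^{*}$ is surjective by Hahn--Banach, and a bounded linear extension $F_{Y}$ is precisely a bounded linear right inverse of $R_{Y}$. Hence $\pi_{Y} := F_{Y} \circ R_{Y}$ is a bounded linear projection on $X^{*}$ with kernel the annihilator $Y^{\perp}$, exhibiting $Y^{\perp}$ as complemented in $X^{*}$. To convert this dual information into a primal statement, I would observe that for any reflexive subspace $Y$ (in particular every finite-dimensional $Y$), the evaluation functional $\phi_{x}(g) := F_{Y}(g)(x)$ belongs to $Y^{**}$ and, by reflexivity, corresponds to some $y_{x} \in Y$. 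The assignment $x \mapsto y_{x}$ is then a bounded linear map $X \to Y$, and it is a projection because $g(y_{x}) = F_{Y}(g)(x) = g(x)$ for all $g \in Y^{*}$ whenever $x \in Y$, forcing $y_{x} = x$. Applied to every two-dimensional subspace, this shows that every two-dimensional subspace of $X$ is the range of a bounded linear projection of controlled norm; invoking Kakutani's classical characterization of Hilbert space (every two-dimensional subspace is the range of a norm-one projection), combined if necessary with Lindenstrauss--Tzafriri to upgrade from the isomorphic to the isometric conclusion, identifies $X$ as a Hilbert space.

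The main obstacle lies in the converse, specifically in controlling the norms of the projections produced by the construction. The algebraic passage from extension to projection is straightforward, but converting a merely bounded extension to a norm-one projection (needed to invoke Kakutani directly and land on the isometric Hilbert space conclusion) requires additional care; this is where Fakhoury's original argument does its real work, extracting from the family of extension operators enough uniformity to pin down the Hilbertian geometry. A secondary technical point is ensuring reflexivity of the subspaces $Y$ at which the primal-side construction is applied, which is automatic for the two-dimensional case that feeds into Kakutani's theorem and is therefore sufficient for the conclusion.
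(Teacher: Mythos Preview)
The paper does not prove this statement; Theorem~\ref{th18} is merely quoted from Fakhoury~\cite{5} (see also~\cite{6}) and then invoked as a black box in the sketch of Theorem~\ref{th19}. There is thus no proof in the paper to compare your proposal against.

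On the substance of your attempt: the forward direction via the orthogonal projection is correct and standard. For the converse, your opening reduction is fine --- a bounded linear right inverse $F_Y$ of the restriction map $R_Y$ yields a projection $F_Y R_Y$ on $X^*$ with kernel $Y^\perp$, and for reflexive $Y$ the predual construction $x\mapsto y_x$ does produce a bounded linear projection of $X$ onto $Y$. The gap, however, is more serious than your closing paragraph suggests. Specializing to two-dimensional $Y$ yields only that each two-dimensional subspace of $X$ is the range of \emph{some} bounded projection, which is automatic in \emph{every} Banach space and therefore carries no information whatsoever; Kakutani's criterion requires projections of norm one, and nothing in the hypothesis bounds $\|F_Y\|$ uniformly in $Y$, so you have no control on the norm of the projection you build. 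Your fallback to Lindenstrauss--Tzafriri does not rescue the argument either: that theorem needs every closed subspace complemented, whereas your reflexivity trick reaches only the reflexive closed subspaces, and in a general $X$ these do not exhaust all closed subspaces. So the converse as sketched does not close. The genuine content of Fakhoury's result is precisely the extraction of quantitative control from the extension hypothesis (or an equivalent local-complementation argument), and that step --- which you correctly flag as ``where the real work is done'' --- is not a matter of additional care but is the entire proof.
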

In what follows, we provide a brief sketch of the proof of the Lipschitz analogue of the previous theorems involving the existence of a bounded linear extension operator $F:\text{ Lip}(Z)\to \text{Lip}(X)$ acting between spaces of Lipschitz functions. More precisely, we prove the following: 
\begin{theorem}\label{th19}\textnormal{(\cite{17})}
Let $X$ be a Banach space and $Z$ a subspace of $X$ such that there exists a bounded linear 'extension' map $F: \textnormal{Lip}(Z)\to \textnormal{Lip}(X),$ i.e.,  $F(g) |_{Y}=g$ for each $g\in  \textnormal{Lip}(Z).$ Then $X$ is a Hilbert space. 
\end{theorem}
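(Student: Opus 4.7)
The plan is to reduce this to Fakhoury's Theorem~\ref{th18}: from the hypothesis, read as providing, for each subspace $Z$ of $X$, a bounded linear Lipschitz extension operator $F\colon\text{Lip}(Z)\to\text{Lip}(X)$, I would produce a bounded linear Hahn--Banach extension operator $\tilde F\colon Z^{*}\to X^{*}$ for every such $Z$, and then invoke Theorem~\ref{th18} to conclude that $X$ is a Hilbert space.

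To construct $\tilde F$, I would compose $F$ with the canonical projection $P\colon\text{Lip}(X)\to X^{*}$ described in the earlier Example, built from an invariant mean on $X$:
\[
P(f)(x)=\int_{X}\bigl(f(x+y)-f(y)\bigr)\,dy,\qquad f\in\text{Lip}(X),\; x\in X.
\]
Since $Z^{*}$ embeds isometrically in $\text{Lip}(Z)$, the restriction $F|_{Z^{*}}\colon Z^{*}\to\text{Lip}(X)$ is bounded linear with $F(g)|_{Z}=g$, so the candidate is $\tilde F:=P\circ F|_{Z^{*}}\colon Z^{*}\to X^{*}$, which is automatically bounded and linear with range in $X^{*}$.

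The crux is to verify the extension property $\tilde F(g)|_{Z}=g$ for $g\in Z^{*}$, which amounts to
\[
\int_{X}\bigl(F(g)(z+y)-F(g)(y)\bigr)\,dy=g(z)\qquad(z\in Z).
\]
The identity $F(g)(z+y')-F(g)(y')=g(z)$ holds whenever $y'\in Z$, since $F(g)|_{Z}=g$ is linear there, but the invariant mean averages $y$ over all of $X$, so the equality is not automatic. A generic Lipschitz function $h$ vanishing on $Z$ need not satisfy $P(h)|_{Z}=0$, so the choice of invariant mean matters. I would address this by using an invariant mean on $X$ compatible with the subgroup structure $Z\subset X$ --- for example one obtained as an iterated integral along a set-theoretic section $X/Z\to X$, combining invariant means on $Z$ and on $X/Z$ --- so that the bounded function $y\mapsto F(g)(z+y)-F(g)(y)-g(z)$, which vanishes identically on $Z$, averages to zero. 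This verification step is the main obstacle; it essentially uses the linearity of $F$ (not merely the existence of a Lipschitz extension of each individual $g$) in tandem with a judicious averaging scheme to control behaviour on cosets of $Z$.

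Having constructed such $\tilde F_{Z}\colon Z^{*}\to X^{*}$ as bounded linear Hahn--Banach extension operators for every subspace $Z\subseteq X$, Theorem~\ref{th18} (Fakhoury) applies to force $X$ to be a Hilbert space, which is the desired conclusion.
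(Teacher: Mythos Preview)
Your overall strategy is exactly the paper's: compose the given $F$ with the invariant--mean projection onto $X^{*}$ and then invoke Fakhoury's Theorem~\ref{th18}. The paper writes a double integral
\[
G(f)(z)=\int_{X}\Bigl\{\int_{X}\bigl[(Ff)(x+y+z)-(Ff)(x+y)\bigr]\,dy\Bigr\}\,dx,
\]
but by shift--invariance in $y$ the inner integral is independent of $x$, so this collapses to your single integral $P(Ff)(z)=\int_{X}\bigl[(Ff)(z+y)-(Ff)(y)\bigr]\,dy$. At the level of what is actually written, the two sketches are the same construction.

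You are also right that the extension identity $\tilde F(g)|_{Z}=g$ is the crux, and the paper merely asserts that $G$ ``can be shown to be an extension map'' without carrying this out. However, your proposed remedy --- building the $X$--mean as an iterated mean along a section $s\colon X/Z\to X$ --- does not close the gap. With that choice one gets, for $z\in Z$,
\[
\tilde F(g)(z)=m_{X/Z}\Bigl(\bar y\longmapsto m_{Z}\bigl(z'\mapsto (Fg)(z+z'+s(\bar y))-(Fg)(z'+s(\bar y))\bigr)\Bigr).
\]
The inner $Z$--average is $Q(\psi_{\bar y})(z)$, where $\psi_{\bar y}(w)=(Fg)(w+s(\bar y))-(Fg)(s(\bar y))$ and $Q\colon\text{Lip}(Z)\to Z^{*}$ is the usual projection; but $\psi_{\bar y}$ coincides with $g$ only when $s(\bar y)\in Z$, i.e.\ $\bar y=0$, so there is no reason this equals $g(z)$ for generic cosets. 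Equivalently, writing $Fg=\tilde g+h$ with $\tilde g\in X^{*}$ a Hahn--Banach extension of $g$ and $h|_{Z}=0$, what you need is $P(h)|_{Z}=0$ for every Lipschitz $h$ vanishing on $Z$, and no choice of invariant mean on $X$ forces that. So the obstacle you flag is genuine and your sketch, like the paper's, leaves it unresolved; a complete argument requires an additional idea (supplied in the reference~\cite{17}, not in the present sketch).
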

Using Banach limits, we can 'linearise' the above property and reduce it to the existence of a bounded linear extension map $G: Z^{*}\to X^{*}.$ 

\noindent (\textit{Sketch of proof:}) We begin by using $F$ to define another Lipschitz map $G: \text{Lip}(Z)\to \text{Lip}(X)$ which actually takes values in $X^{*}$ and when restricted to $Z^{*}$ yields an extension map $G: Z^{*}\to X^{*}$ which is Lipschitz. Using the existence of a Banach limit $\int_{X}\dot dx$ on $\ell_\infty (X),$ the new map $G: \text{Lip}(Z)\to \text{Lip}(X)$ is defined as follows:
 \begin{align*}
 G(f)(z)=\int_{X}\left\{\int_{X}\left[(Ff)(x+y+z)-(Ff)(x+y)\right]dy\right\}dx,~~\quad f\in\textnormal{Lip}(Z),~z\in X.
 \end{align*}
The Lipschitz property of $f$ yields that the above expression on the (RHS) of the equation defines a Lipschitz map on $X:$ for $z_1, z_2\in X,$ we have
\begin{align*}
\|G(f)z_1-G(f)z_2\|\leq \text{Lip}(F)~\text{Lip}(f)\|z_1-z_2\|.
\end{align*}
Further, since
\begin{align*}
G(f)(z_1+z_2)&=\int_{X}\left\{\int_{X} [(Ff)(x+y+z_1+z_2)-(Ff)(x+y)]dy\right\}dx\\& =\int_{X}\left\{\int_{X} [(Ff)(x+y+z_1+z_2 )-(Ff)(x+y+z_2 )]dy\right\}dx\\&\qquad\qquad+\int_{X} \left\{\int_{X} [(Ff)(x+y+z_2 )-(Ff)(x+y)]dy\right\}dx\\ &=\int_{X} \left\{\int_{X} [(Ff)(x+y+z_1 )-(Ff)(x+y)]dy\right\}dx\\&\qquad\qquad +\int_{X}\left\{\int_{X} [(Ff)(x+y+z_2 )-(Ff)(x+y)]dy\right\}dx\\& =G(f)(z_1 )+G(f)(z_2 ),
\end{align*}
it follows that $G$ takes its values in $X^{*}.$ Thus, restricting to $Z^{*}$ gives a bounded linear map, again denoted by $G: Z^{*}\to X^{*}$ which can be shown to be an 'extension map': $G(g)(z)=g(z),~ g∈\in Z^{*}$ and $z\in Z.$ By Theorem \ref{th18}, $X$ is a Hilbert space.

Combining the previous theorem with the linear version of the Hahn Banach selection theorem yields the following corollary:
\begin{corollary}\textnormal{(\cite{17})}
A Banach space has the property that each closed and convex subset of $X$ is a Lipschitz retract of $X$ if and only if $X$ is isomorphic to a Hilbert space.
\end{corollary}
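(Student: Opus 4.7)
\noindent\emph{Proposal.} The corollary is an equivalence, so the plan is to handle the two implications separately and to reduce the nontrivial direction to Theorem~\ref{th19}.

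For the easy direction, suppose $X$ is isomorphic to a Hilbert space $(H,\langle\cdot,\cdot\rangle)$ via a linear isomorphism $T:X\to H$, and let $C\subset X$ be a nonempty closed convex subset. The image $T(C)\subset H$ is again closed and convex, so the nearest-point (metric) projection $\pi_{T(C)}:H\to T(C)$ is well-defined and $1$-Lipschitz in the Hilbert norm. The map $r=T^{-1}\circ \pi_{T(C)}\circ T$ is then a retraction of $X$ onto $C$ whose Lipschitz constant is bounded by $\|T\|\,\|T^{-1}\|$. This gives one implication with no essential difficulty.

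For the converse, assume every closed convex subset of $X$ is a Lipschitz retract. The strategy is to feed this hypothesis into Theorem~\ref{th19}, i.e.\ to manufacture, for every closed subspace $Z\subset X$, a bounded linear extension operator $F:\operatorname{Lip}(Z)\to \operatorname{Lip}(X)$. Since a closed subspace is closed and convex, the hypothesis supplies a Lipschitz retraction $r:X\to Z$, and one sets
\begin{align*}
F(g)(x) = g(r(x)), \qquad g\in \operatorname{Lip}(Z),\ x\in X.
\end{align*}
The verifications are routine: linearity in $g$ is immediate; since $\theta\in Z$ and $r(\theta)=\theta$, the function $F(g)$ vanishes at the basepoint; the Lipschitz estimate $\|F(g)\|_{\operatorname{Lip}}\le \operatorname{Lip}(r)\,\|g\|_{\operatorname{Lip}}$ shows $F$ is bounded with $\|F\|\le \operatorname{Lip}(r)$; and $F(g)|_Z=g$ because $r$ fixes $Z$ pointwise. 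Hence the hypothesis of Theorem~\ref{th19} is met for every subspace $Z$, and an application of that theorem yields that $X$ carries a Hilbertian structure.

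The only genuinely delicate point I anticipate is the bookkeeping around \emph{isometric} versus \emph{isomorphic}: Theorem~\ref{th19} is phrased as ``$X$ is a Hilbert space,'' but since the Lipschitz retractions produced by our assumption need only be uniformly Lipschitz (with no control on the constant near $1$), the natural conclusion is that $X$ is isomorphic, not isometric, to a Hilbert space. I would either invoke the standard isomorphic form of Theorem~\ref{th19} (via Fakhoury's theorem, where a bounded linear Hahn--Banach selector for every subspace characterises Banach spaces isomorphic to Hilbert space) or remark that by renorming $X$ with the equivalent norm induced from the inner product one recovers the isometric statement. This is the only subtlety; otherwise the corollary is a clean composition-with-retraction argument slotted into the previously proved theorem.
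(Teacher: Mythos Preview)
Your proposal is correct and is exactly the argument the paper has in mind: the text merely says the corollary follows by ``combining the previous theorem with the linear version of the Hahn--Banach selection theorem,'' and your construction $F(g)=g\circ r$ together with Theorem~\ref{th19} (which in turn invokes Theorem~\ref{th18}) is precisely that combination spelled out. Your remark on isometric versus isomorphic is well placed, since Fakhoury's theorem (Theorem~\ref{th18}) is in fact an isomorphic characterisation.
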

\begin{corollary}\textnormal{(\cite{17})}
Let $X$ be a Banach space and $Y$ a subspace of $X$ such that there exists a bounded linear map 'extension' map $L: \textnormal{Lip}(Y)  \to  \textnormal{ Lip}(X),$ i.e.,  $L(g) |_{Y}=g$ for each $g\in \textnormal{Lip}(Y). $ Then $Y$ is 'locally complemented'.
\end{corollary}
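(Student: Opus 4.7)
The plan is to convert the given bounded linear Lipschitz-function extension operator $L:\text{Lip}(Y)\to\text{Lip}(X)$ into a bounded linear extension operator between the dual spaces, $G:Y^{*}\to X^{*}$. By the standard characterisation of local complementation via the existence of such an extension operator (of which Theorem~\ref{th18} is the Hilbert-space prototype), this is precisely what it means for $Y$ to be locally complemented in $X$, so producing $G$ will finish the proof.

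The construction is the Banach-limit linearisation procedure from the sketch of Theorem~\ref{th19}, applied verbatim to the present setting. Define $G:\text{Lip}(Y)\to\text{Lip}(X)$ by
\[
G(f)(z)\;=\;\int_{X}\!\!\int_{X}\bigl[(Lf)(x+y+z)-(Lf)(x+y)\bigr]\,dy\,dx,\qquad f\in\text{Lip}(Y),\ z\in X,
\]
where both integrals denote the action of a fixed invariant mean on $\ell_{\infty}(X)$. Two routine checks transfer from Theorem~\ref{th19}: first, $G(f)$ is Lipschitz in $z$, with constant at most $\|L\|\,\text{Lip}(f)$, because $Lf$ is Lipschitz and the mean is positive; second, $G(f)$ is additive in $z$, by the telescoping split through $(Lf)(x+y+z_{2})$ together with the shift invariance of the inner average. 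Consequently $G(f)\in X^{*}$ for every $f\in\text{Lip}(Y)$, so $G$ is a bounded linear map $\text{Lip}(Y)\to X^{*}$.

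Next, restrict $G$ to the subspace $Y^{*}\subset\text{Lip}(Y)$. Exactly as in the proof of Theorem~\ref{th19}, the resulting map $G\vert_{Y^{*}}:Y^{*}\to X^{*}$ satisfies the extension identity $G(g)(y)=g(y)$ for every $g\in Y^{*}$ and every $y\in Y$; the ingredients are $(Lg)\vert_{Y}=g$, the $\mathbb{R}$-linearity of $g$ on $Y$, and the invariance properties of the double average, which together force the double Banach-limit integral to collapse to $g(y)$. Thus $G\vert_{Y^{*}}$ is the required bounded linear extension operator $Y^{*}\to X^{*}$, and the corollary follows.

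The step I expect to be the main obstacle is this last one: verifying the extension identity $G(g)\vert_{Y}=g$. The Lipschitz estimate and the additivity of $G$ are formal manipulations that copy over from Theorem~\ref{th19} without change, whereas reducing the double Banach-limit average to $g(y)$ for $y\in Y$ is precisely the point at which the extension property of $L$ and the linear structure of $Y$ must be combined, and is the same technical detail left implicit in the sketch of Theorem~\ref{th19}. Everything else in the argument is mechanical once that verification is in hand.
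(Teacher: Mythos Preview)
Your proposal is correct and follows exactly the route the paper intends: the corollary is stated without proof, but its placement immediately after the sketch of Theorem~\ref{th19} and before the Remark citing Fakhoury's characterisation of local complementation via a bounded linear extension $Y^{*}\to X^{*}$ makes clear that the argument is precisely the Banach-limit linearisation you describe. You also correctly isolate the one non-formal step, the extension identity $G(g)\vert_{Y}=g$, which the paper likewise leaves implicit in the sketch of Theorem~\ref{th19}.
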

A subspace $Y$ of a Banach space $X$ is said to be locally complemented if there exists $c>0$ such that for each finite dimensional subspace $M$ of $X,$ there exists a continuous linear map $f:M\to Y$ with $\|f\|\leq c$ and $f(x)=x$ for all $x\in M\cap Y.$

\begin{remark}
\textnormal{ It is known \cite{5} that $Y$ is locally complemented in $X$ if and only if for each (closed) subspace $Y$ of $X,$ there exists a bounded linear 'extension' map $\psi: Y^{*}\to X^{*},$ $\psi(g) |_{Y}=g$ for each $g\in  Y^{*}.$}
\end{remark}
The previous corollary motivates the question whether local complementedness of a subspace Y of a Banach space X implies that it is a Lipschitz retract.

The previous corollary motivates the question whether local complementedness of a subspace $Y$ of a Banach space $X$ implies that it is a Lipschitz retract. It can be shown that this question is equivalent to one of the most important open problems in nonlinear Banach space theory:

\noindent \textbf{Question}: Given a Banach space $X,$ is it true that $X$ is a Lipschitz retract of $X^{**}.$

In 2009, Kalton showed that the answer is in the negative:\\
Theorem (\cite{8}): There exists an non-separable Banach space $X$ which is not a Lipschitz retract of $X^{**}.$ However, the question remains open in the separable case!

\section*{Acknowledgement}
Part of this work was carried out at the Harish Chandra Research Institute, Allahabad while the author was visiting the institute during Dec. 2018-Feb. 2019. He wishes to record his thanks to his host Prof. Ratnakumar for his kind invitation and hospitality during the authors stay at the institute. Thanks are also due to the referee for careful reading of the manuscript and for his suggestions. 



\end{document}